\newtheorem{theorem}{Theorem}[section]
\newtheorem{definition}{Definition}
\newtheorem{lemma}{Lemma}
\newtheorem{remark}{Remark}
\newtheorem{proposition}{Proposition}
\begin{document}

\title{The Theory of Quaternion Matrix Derivatives}
\author{Dongpo~Xu and Danilo P. Mandic, \textit{Fellow}, \textit{IEEE}
\thanks{This work was submitted to IEEE TSP on Jun 03, 2014.}
\thanks{Dongpo Xu is with the College of Science, Harbin Engineering University, Harbin 150001, China, and with
the Department of Electrical and Electronic Engineering, Imperial College London, London SW7 2AZ, UK. (e-mail:dongpoxu@gmail.com)}
\thanks{Danilo P. Mandic is with the Department of Electrical and Electronic Engineering,
Imperial College London, London SW7 2AZ, UK. (e-mail: d.mandic@imperial.ac.uk).}}

\markboth{manuscript for IEEE Trans. Signal Process.}{Shell
\MakeLowercase{\textit{et al.}}: Bare Demo of IEEEtran.cls for
Journals}

\maketitle

\begin{abstract}
A systematic theory is introduced for calculating the derivatives of quaternion matrix function with respect to quaternion matrix variables.
The proposed methodology is equipped with the matrix product rule and chain rule and it is able to handle both analytic and nonanalytic functions.
This corrects a flaw in the existing methods, that is, the incorrect use of the traditional product rule. In the framework introduced,
the derivatives of quaternion matrix functions can be calculated directly without the differential of this function.
Key results are summarized in tables. Several examples show how the quaternion matrix derivatives can be used as an
important tool for solving problems related to signal processing.
\end{abstract}

\begin{keywords}
Quaternion differentials, quaternion matrix derivatives, Jacobian, non-analytic functions, GHR calculus.
\end{keywords}

\IEEEpeerreviewmaketitle

\section{Introduction}
In recent years, quaternion signal processing has attracted
considerable research interest in areas, including image processing
\cite{Pei,Moxey,Bulow,Felsberg,Ell07}, computer graphics \cite{Hanson}, aerospace and satellite tracking \cite{Kuipers,Fortuna}, modeling of wind profile \cite{Took09,Took10,Took102},
processing of polarized waves \cite{Bihan,Miron,Buchholz}, and design of space-time block codes \cite{Belfiore,Belfiore05,ChenLi,Hollanti,Sirianunpiboon,Seberry}.
Recent mathematical tools to support these developments include the quaternion
singular value decomposition \cite{Bihan}, quaternion Fourier transform \cite{Ell,Said},
statistical analysis \cite{Via10,Ginzberg,Took11} and Taylor series expansion \cite{Moreno12}.
However, gradient based optimisation techniques in quaternion algebra have experienced slow progress,
as the quaternion analyticity conditions are too stringent. For example, the generalised Cauchy-Riemann condition \cite{Watson} restricts the class of quaternion analytic
functions to linear functions and constants. One attempt to
relax this constraint is the so-called Cauchy-Riemann-Fueter (CRF) condition \cite{Sudbery}, however, the
even polynomial functions do not satisfy the CRF condition.
A slice regular condition was proposed in \cite{Gentili06,Gentili07}, which contains the polynomials and power series with one-sided quaternion
coefficients, however, the product and composition of two
slice regular functions are generally not slice regular.

In quaternion statistical signal processing,
a common optimization objective is to minimize a real cost function of quaternion variables, typically
in the form of error power, $f(q)=|e(q)|^2$, however, such a function is obviously not-analytic according to
quaternion analysis \cite{Sudbery,Deavours,Leo} and therefore quaternion derivative cannot be used.
To circumvent this problem, the so called pseudo-derivatives are often employed ,
which treat $f$ as a real analytic function of the four real components of quaternion variable, and
then take the real derivatives with respect to these independent real
parts, separately. However, using this approach is the computations become cumbersome and tedious even for very simple algorithms.
An alternative and more elegant approach that can deal with non-analytic functions directly in the quaternion domain is to use the HR calculus \cite{Mandic11}, which takes the derivatives
of $f$ with respect to quaternion variable and its involutions.
The HR calculus has been utilized to derive quaternion independent component analysis \cite{Javidi},
nonlinear adaptive filtering \cite{Ujang}, affine projection algorithms \cite{Jahanchahi13}, and Kalman filtering \cite{Jahanchahi14}.
However, the traditional product rule does not apply within the HR calculus
because of the non-commutativity of quaternion product.
The recently proposed generalized HR (GHR) calculus \cite{DPXU} rectifies this issue by making use of the quaternion
rotation. It also comprises a novel product rule and chain rule and bears a great resemblance to the CR (or Wirtinger) calculus \cite{Wirtinger,Brandwood,Kreutz},
which has been instrumental for the developments in complex-valued signal processing \cite{Lih08,Erdogan,Demissie,Bouboulis,Mandic2009b} and optimization \cite{Sorber}.
In \cite{DPXU}, the authors give a systematic treatment of the case of quaternion scalar functions which depend on quaternion argument, however,
the more general matrix case was not considered. Problems where the unknown parameter is a quaternion matrix are wide ranging including array signal processing \cite{Bihan,Miron}, space-time coding \cite{Belfiore05,ChenLi,Hollanti},
and quaternion orthogonal designs \cite{Seberry}.

The problem of finding derivatives with respect to real-valued matrices is well understood and has been studied in \cite{Magnus,Brewer,Harville}.
For the complex-valued vector case, the mathematical foundations for derivative operations have been considered in \cite{Brandwood},
where the major contribution is the notion of complex gradient and the condition of stationary point in the context of optimization.
This work was further extended to second order derivatives together with a duality relationship between the complex gradient and Hessian and their real bivariate counterparts \cite{Bos}.
A systematical treatment of all the related concepts has been summarized in \cite{Kreutz}.
More general complex matrix derivatives have been thoroughly studied in \cite{Hjorungnesbk}.

Our aim here is to establish a systematic theory for calculating the derivatives of matrix functions with respect to
quaternion matrix variables. To this end, the GHR calculus for scalars is used to develop calculus for functions of quaternion matrices,
The \textrm{vec} operator and the Jacobian matrix play an important role in the resulting calculus,
giving very general matrix product and chain rules.
In addition, the proposed rules are quite generic and can reduce to scalar calculus rules when the matrices involved are of order one.
For a real scalar function of quaternion matrix variable, the necessary conditions for optimality can be found by either setting the
derivative of the function with respect to the quaternion matrix variable or its quaternion involutions to zero.
Meanwhile, the direction of maximum rate of change of the function is given by the Hermitian of derivative of the function with respect to the quaternion matrix variable.
Our results offer therefore a generalization of the results for scalar functions of vector variables.
We generalize the complex-valued matrix derivatives given in \cite{Hjorungnes07,Hjorungnesbk} to the quaternion matrix case, and calculate directly the quaternion matrix derivatives without the quaternion differentials of the functions.
The proposed theory is useful for numerous optimization problems which involve quaternion matrix parameters.

The rest of this paper is organized as follows: Section II introduces
the basic concepts and properties of quaternion algebra.
A discussion of the differences between analytic and non-analytic functions is presented
in Section III. In Section IV, the quaternion differential is introduced and several key differentials are presented.
The definition and calculus rules of the quaternion matrix derivatives are given in Section V. Section VI contains some important results, such as
conditions for finding stationary points, the direction in which the
function has the maximum rate of change, and the steep descent method.
In Section VII, several key results
are placed in tables and some more practical results are derived based on proposed theory.
Finally, Section VIII concludes the paper.

\subsection{Notations}
In this paper, we use bold-faced upper case letters to denote
matrices, bold-faced lower case letters for column vectors, and
standard lower case letters for scalar quantities.
The classification of functions
and variables is shown in Table \ref{tb:funs}. Superscripts
$(\cdot)^*$, $(\cdot)^T$ and $(\cdot)^H$ denote the quaternion conjugate,
transpose and Hermitian (i.e., transpose and quaternion conjugate),
respectively. $\mathfrak{R}({\bm A})$, ${\rm Tr}({\bm A})$ and $\|{\bm A}\|$
denote the real part, trace and norm of ${\bm A}$, $\otimes$ and $\odot$ denote the Kronecker
and Hadamard product, ${\rm vec}(\cdot)$ vectorizes a matrix by stacking its columns, ${\bm I}_N$ is the identity matrix of dimension $N$, and ${\bm 0}_{N\times S}$ denotes
the $N\times S$ zero matrix. By ${\rm reshape}(\cdot)$ we refer to any linear reshaping operator of the matrix, examples of such operators are the transpose $(\cdot)^T$ and ${\rm vec}(\cdot)$.

\begin{table*}[!htbp]
  \centering
   \caption{Classification of functions and variables}\label{tb:funs}
\renewcommand\arraystretch{1.3}
\begin{tabular}{|c|c|c|c|}
\hline
 Function type  & Scalar variable $q\in\mathbb{H}$  & Vector variable ${\bm q}\in\mathbb{H}^{N\times 1}$ &Matrix variable ${\bm Q}\in\mathbb{H}^{N\times S}$  \\
\hline
\multicolumn{4}{c}{}\vspace{-6pt} \\
\hline
 Scalar function $f\in\mathbb{H}$ &$f(q)$&$f({\bm q})$& $f({\bm Q})$ \\
\hline
 Vector function ${\bm f}\in\mathbb{H}^{M\times 1}$ &${\bm f}(q)$&${\bm f}({\bm q})$& ${\bm f}({\bm Q})$ \\
\hline
 Matrix function ${\bm F}\in\mathbb{H}^{M\times P}$ &${\bm F}(q)$&${\bm F}({\bm q})$& ${\bm F}({\bm Q})$ \\
\hline
\end{tabular}\label{tab:truth2}
\end{table*}

\section{Quaternion Algebra}
Quaternions are an associative but not commutative algebra
over $\mathbb{R}$, defined as
\begin{equation}\label{eq:realreprent}
\mathbb{H}= \{q_a+iq_b+jq_c+kq_d\;|\;q_a,q_b,q_c,q_d\in\mathbb{R}\}
\end{equation}
where $\{1,i,j,k\}$ is a basis of $\mathbb{H}$, and the imaginary units $i,j$ and $k$ satisfy $i^2=j^2=k^2=ijk=-1$, which implies $ij=k=-ji$, $jk=i=-kj$, $ki=j=-ik$. For any quaternion
\begin{equation}\label{eq:qqabcd}
q=q_a+iq_b+jq_c+kq_d=Sq+Vq
\end{equation}
the scalar (real) part is denoted by $q_a=Sq=\mathfrak{R}(q)$, while the vector part $Vq=\mathfrak{I}(q)=iq_b+jq_c+kq_d$ comprises the three imaginary parts. The product for $p,q \in \mathbb{H}$ is given by
\begin{equation}
pq=SpSq-Vp \cdot Vq+SpVq+SqVp+Vp\times Vq
\end{equation}
where the symbols $'\cdot'$ and $'\times'$ denote the usual inner product and vector product, respectively.
The presence of the vector product means that the quaternion product is noncommutative, and in general for $p,q\in\mathbb{H}$, $pq\neq qp$. The conjugate of a quaternion $q$ is defined as $q^*=Sq-Vq$, while the conjugate of the product satisfies $(pq)^*=q^*p^*$. The modulus of a quaternion is defined as $|q|=\sqrt{qq^*}=\sqrt{q_a^2+q_b^2+q_c^2+q_d^2}$, and it is
easy to check that $|pq|=|p||q|$. The inner product of $p$ and $q$ is defined as $<p,q>=\mathfrak{R}(p^*q)$
The inverse of a quaternion $q\neq 0$ is $q^{-1}=q^*/|q|^2$, and an important property of the inverse is
\begin{equation}
(pq)^{-1}=\frac{(pq)^*}{|pq|^2}=\frac{q^*p^*}{|q|^2|p|^2}=\frac{q^*}{|q|^2}\frac{p^*}{|p|^2}=q^{-1}p^{-1}
\end{equation}
(note the change in order). If $|q| = 1$, we call $q$ a \textit{unit} quaternion. A quaternion $q$ is said to be \textit{pure} if $\mathfrak{R}(q)=0$, then
$q^*=-q$ and $q^2=-|q|^2$. Thus, a \textit{pure unit} quaternion is a square root of -1, such as the imaginary units $i,j$ and $k$.

Quaternions can also be written in the \textit{polar} form
\begin{equation}
q=|q|\left(\frac{Sq}{|q|}+\frac{Vq}{|Vq|}\frac{|Vq|}{|q|}\right)=|q|(\cos \theta + \hat{q} \sin \theta )
\end{equation}
where $\hat{q}=Vq/|Vq|$ is a pure unit quaternion and $\theta=\arccos(S_q/|q|)$ is the angle (or argument)
of the quaternion. We shall next introduce the quaternion rotation and involution.
\begin{definition}[Quaternion Rotation \cite{Ward}]\label{def:qrot}
For any quaternion $q$, consider the transformation
\begin{equation*}
 q^{\mu}\triangleq \mu q \mu^{-1}
\end{equation*}
where $\mu=|\mu|(\cos \theta + \hat{\mu} \sin \theta )$ is any non-zero quaternion.
This transformation geometrically describes a 3-dimensional rotation of the vector part of $q$ through an angle $2\theta$ about the vector part of $\mu$.
\end{definition}

Some basic properties of the notation in Definition \ref{def:qrot} (see \cite{DPXU,Buchholz}) are:
\begin{equation}\label{pr:pqmu}
(pq)^{\mu}=p^{\mu}q^{\mu},\;   pq=q^pp=q p^{(q^*)},\;   \forall p, q \in\mathbb{H}
\end{equation}
\begin{equation}\label{pr:def1qmunu}
q^{\mu \nu}=(q^{\nu} )^{\mu},\; q^{\mu*} \triangleq (q^*)^{\mu}=(q^{\mu})^*\triangleq q^{*\mu},\;  \forall \nu, \mu \in\mathbb{H}
\end{equation}
Note that the real representation in \eqref{eq:realreprent} can be
easily generalized to a general orthogonal system $\{1,i^{\mu},j^{\mu},k^{\mu} \}$ given in \cite{DPXU,Ward}, where the following properties hold
\begin{equation}\label{pr:prodijkmurl}
i^{\mu}i^{\mu}=j^{\mu}j^{\mu}=k^{\mu}k^{\mu}=i^{\mu}j^{\mu}k^{\mu}=-1
\end{equation}

\begin{definition}[Quaternion Involution \cite{Ell07}]\label{def:qinv}
The involution of a quaternion $q$ around a pure unit quaternion $\eta$ is
\begin{equation*}
q^{\eta}= \eta q \eta^{-1}=\eta q \eta^*=-\eta q \eta
\end{equation*}
and represents a rotation of $q$ about $\eta$ through $\pi$ .
\end{definition}

In particular, the involutions around the imaginary units $i,j,k$ are given by \cite{Ell07}
\begin{equation}\label{eq:invijk}
\begin{split}
&q^i=-iqi=q_a+iq_b-jq_c-kq_d\\
&q^j=-jqj=q_a-iq_b+jq_c-kq_d\\
&q^k=-kqk=q_a-iq_b-jq_c+kq_d
\end{split}
\end{equation}
which allows us to express the four real-valued components of a quaternion $q$ as \cite{Sudbery,Took11,Mandic11}
\begin{eqnarray}\label{eq:z1qlink}
q_a=\frac{1}{4}(q+q^i+q^j+q^k),\;q_b=\frac{1}{4i}(q+q^i-q^j-q^k)\\
q_c=\frac{1}{4j}(q-q^i+q^j-q^k),\;q_d=\frac{1}{4k}(q-q^i-q^j+q^k)
\end{eqnarray}
This is analogous to the complex case, where $x=\frac{1}{2}(z+z^*)$
and $y=-\frac{i}{2}(z-z^*)$ for any $z=x+iy$
\cite{Picinbono,Moreno08}. Note that the quaternion conjugation
operator $(\cdot)^*$ is also an involution, that is
\begin{equation}\label{eq:linkqconjq}
q^*=\frac{1}{2}(-q+q^i+q^j+q^k),\;q=\frac{1}{2}(-q^*+q^{i*}+q^{j*}+q^{k*})
\end{equation}

\section{Analytic versus Non-Analytic Functions}\label{sec:nonanaly}
A function that is analytic is also called regular, or monogenic. Due to the non-commutativity of quaternion products, there are two ways to write the quotient in the definition of quaternion derivative, as shown below.
\begin{proposition}[\cite{Gentili09}]\label{pr:tradefideri}
Let $D\subseteq \mathbb{H}$ be a simply-connected domain of definition of the function $f:D\rightarrow \mathbb{H}$. If for any $q\in D$
\begin{equation}\label{eq:leftlimderiv}
\lim_{h\rightarrow 0}[\left(f(q+h)-f(q)\right)h^{-1}]
\end{equation}
exists in $\mathbb{H}$, then necessarily $f(q)=\omega q +\lambda$ for some $\omega,\lambda \in \mathbb{H}$. If for any $q\in D$
\begin{equation}\label{eq:rtlimderiv}
\lim_{h\rightarrow 0}[h^{-1}\left(f(q+h)-f(q)\right)]
\end{equation}
exists in $\mathbb{H}$, then necessarily $f(q)=q \nu +\lambda$ for some $\nu,\lambda \in \mathbb{H}$.
\end{proposition}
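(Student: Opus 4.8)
The plan is to reduce the statement to a short computation with the first-order partial derivatives of $f$ with respect to the real coordinates $q_a,q_b,q_c,q_d$ of $q=q_a+iq_b+jq_c+kq_d$. Consider first the left quotient in \eqref{eq:leftlimderiv} and denote its value by $f'(q)$. Existence of this limit at every $q\in D$ forces $f$ to be real-differentiable there, with total differential equal to the $\mathbb{R}$-linear map $h\mapsto f'(q)\,h$; in particular all four real partial derivatives of $f$ exist. Evaluating the limit along the real direction $h=t$ gives $f'=\partial f/\partial q_a$, while evaluating it along $h=ti,\,tj,\,tk$ (using $(ti)^{-1}=-i/t$, etc., and then solving for the partial) yields the ``quaternionic Cauchy--Riemann'' relations
\begin{equation*}
\frac{\partial f}{\partial q_b}=g\,i,\qquad \frac{\partial f}{\partial q_c}=g\,j,\qquad \frac{\partial f}{\partial q_d}=g\,k,
\end{equation*}
where $g:=\partial f/\partial q_a=f'$.

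First I would differentiate these relations once more and use equality of mixed partials. From $g=\partial f/\partial q_a$ one obtains $\partial g/\partial q_b=(\partial g/\partial q_a)i$, and likewise with $j,k$; substituting these into the identity $\partial^2 f/\partial q_c\partial q_b=\partial^2 f/\partial q_b\partial q_c$, i.e.\ $(\partial g/\partial q_c)i=(\partial g/\partial q_b)j$, gives $(\partial g/\partial q_a)(ji)=(\partial g/\partial q_a)(ij)$, that is $-(\partial g/\partial q_a)k=(\partial g/\partial q_a)k$. Hence $\partial g/\partial q_a=0$, and then all four partials of $g$ vanish, so $g$ is constant on the connected domain $D$, say $g\equiv\omega\in\mathbb{H}$. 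The displayed relations then read $\partial f/\partial q_a=\omega$, $\partial f/\partial q_b=\omega i$, $\partial f/\partial q_c=\omega j$, $\partial f/\partial q_d=\omega k$, so $f(q)-\omega q$ has vanishing differential on $D$ and therefore equals a constant $\lambda$; that is, $f(q)=\omega q+\lambda$. (The left factor $\omega$ comes out precisely because in \eqref{eq:leftlimderiv} the increment is divided by $h$ on the right.)

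For the right quotient in \eqref{eq:rtlimderiv} the same scheme applies with ``left'' and ``right'' interchanged: now the differential of $f$ is $h\mapsto h\,f'(q)$, the coordinate directions give $\partial f/\partial q_b=i\,g$, $\partial f/\partial q_c=j\,g$, $\partial f/\partial q_d=k\,g$ with $g=\partial f/\partial q_a=f'$, and the mixed-partial identity together with $ji=-ij$ again forces $g$ to be a constant $\nu\in\mathbb{H}$. Integrating $\partial f/\partial q_a=\nu$, $\partial f/\partial q_b=i\nu$, $\partial f/\partial q_c=j\nu$, $\partial f/\partial q_d=k\nu$ over $D$ yields $f(q)=q\nu+\lambda$.

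The one genuine gap in this outline is the appeal to Clairaut's theorem on mixed partials: a priori we know only that $f$ is differentiable (hence continuous) and that its first partials obey the Cauchy--Riemann-type relations, not that $f\in C^2$. The main work is therefore a regularity/bootstrapping step establishing that $f$ is in fact $C^\infty$ --- for instance by noting that those relations form an overdetermined, elliptic, constant-coefficient first-order system, so that even distributional solutions are automatically smooth; alternatively, a Goursat/Morera-type argument shows directly that $f'$ is continuous and $f$ real-analytic. Once smoothness is available the computation above is routine, the only remaining pitfall being the bookkeeping of non-commutative products, i.e.\ keeping each multiplication on its correct (left or right) side throughout.
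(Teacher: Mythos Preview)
The paper does not prove this proposition at all: it is quoted, with attribution to \cite{Gentili09}, as background motivation for why the na\"ive quaternionic derivative is too restrictive, and the text moves on immediately to the CRF and slice-regular conditions. So there is no ``paper's own proof'' to compare against.

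Your argument is the classical one (it appears already in Sudbery~\cite{Sudbery}) and the computation is correct: taking $h$ along $1,i,j,k$ gives the four relations $\partial_b f=g\,i$, $\partial_c f=g\,j$, $\partial_d f=g\,k$ with $g=\partial_a f$; one pass of Clairaut yields $\partial_b g=(\partial_a g)i$ etc., and a second pass on $\partial_c\partial_b f=\partial_b\partial_c f$ collapses, via $ji=-ij$, to $(\partial_a g)k=-(\partial_a g)k$, whence $\partial_a g=0$ and $g\equiv\omega$. The right-quotient case is symmetric.

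You have correctly isolated the only genuine gap, namely the appeal to equality of mixed partials when a priori $f$ is only once real-differentiable. Your proposed remedies are the standard ones; the most economical is to observe that the relations force the restriction of $f$ to each complex line $\mathbb{R}+\mathbb{R}i$, $\mathbb{R}+\mathbb{R}j$, $\mathbb{R}+\mathbb{R}k$ through any point to be holomorphic (with values in $\mathbb{H}\cong\mathbb{C}^2$), so a Goursat-type argument gives continuity of $g=f'$ and hence $f\in C^1$; iterating (the derivative $g$ satisfies the same system) yields $f\in C^\infty$. Alternatively, as you note, the system $\partial_b f-(\partial_a f)i=0$, $\partial_c f-(\partial_a f)j=0$, $\partial_d f-(\partial_a f)k=0$ is a constant-coefficient elliptic first-order system, so weak solutions are smooth. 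Either route closes the gap; your bookkeeping of left/right factors is accurate throughout. One small addendum: simple connectivity of $D$ is not actually needed anywhere---connectedness suffices to pass from ``locally constant'' to ``constant'' for $g$.
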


Proposition \ref{pr:tradefideri} indicates that the traditional definitions of derivative in \eqref{eq:leftlimderiv} and \eqref{eq:rtlimderiv} are too restrictive.
One attempt to relax this constraint is the so-called Cauchy-Riemann-Fueter (CRF) equation, given by
\begin{equation}\label{eq:crf}
\frac{\partial f}{\partial q_a}+i\frac{\partial f}{\partial q_b}+j\frac{\partial f}{\partial q_c}+k\frac{\partial f}{\partial q_d}=0
\end{equation}
However, Gentili and Struppa in \cite{Gentili06,Gentili07} point out that the polynomial functions (even the identity $f(q)=q$) do not satisfy the CRF condition.
To further relax this constraint, a slice
monogenic condition was proposed in \cite{Gentili06}, by adopting the newer setting of slice domains to give
\begin{equation}\label{eq:lac}
\left(\frac{\partial}{\partial x}+I\frac{\partial}{\partial y}\right)f_I(x+Iy)=0,\; \textrm{for } \forall I\in \mathbb{S}
\end{equation}
where $q=x+Iy$ ($x, y$ real numbers), $f_I(q)$ is the restriction of $f(q)$ to the complex line $L_I=\mathbb{R}+I\mathbb{R}$ and $\mathbb{S}=\{I\in\mathbb{H}\;|\;I^2=-1\}$. This class of slice monogenic functions contains the polynomials (and, more generally, power series) with right-sided quaternion coefficients. However, the product and composition of two slice monogenic functions $f$ and $g$ are generally not slice monogenic. For example, if $g(q)=q$ and $f(q)=q\omega$, $\omega\in\mathbb{H}$, then $f$ and $g$ are slice monogenic functions, but the product $f(q)g(q)=q\omega q$ is not a slice monogenic function.

The quaternion derivative in quaternion analysis is defined only
for analytic functions. However, in engineering problems, objective functions of interest are often real-valued to minimize or maximize them
and thus not analytic, such as
\begin{equation}
f(q)=|q|^2=qq^*
\end{equation}
In order to take the derivative of such functions, the HR calculus extends the classical idea of complex CR calculus \cite{Brandwood,Kreutz,Wirtinger} to the quaternion field,
which comprises of two groups of derivatives: the HR-derivatives \cite{Mandic11}
\begin{align}\label{def:hrder}
{\large
\left[\begin{array}{cc}
\frac{\partial f}{\partial q}\\
\frac{\partial f}{\partial q^i}\\
\frac{\partial f}{\partial q^j}\\
\frac{\partial f}{\partial q^k}
\end{array}\right]}=\frac{1}{4}
\left[\begin{array}{cccc}
1 & -i  & -j & -k \\
1 & -i & j  & k\\
1 & i  & -j & k\\
1 & i  & j  & -k
\end{array}\right]
{\large
\left[\begin{array}{cc}
\frac{\partial f}{\partial q_a}\\
\frac{\partial f}{\partial q_b}\\
\frac{\partial f}{\partial q_c}\\
\frac{\partial f}{\partial q_d}
\end{array}\right]}
\end{align}
and the conjugate HR-derivatives
\begin{align}\label{def:hrconjder}
{\large
\left[\begin{array}{cc}
\frac{\partial f}{\partial q^*}\\
\frac{\partial f}{\partial q^{i*}}\\
\frac{\partial f}{\partial q^{j*}}\\
\frac{\partial f}{\partial q^{k*}}
\end{array}\right]}=\frac{1}{4}
\left[\begin{array}{cccc}
1 & i  & j & k \\
1 & i & -j  & -k\\
1 & -i  & j & -k\\
1 & -i  & -j  & k
\end{array}\right]
{\large
\left[\begin{array}{cc}
\frac{\partial f}{\partial q_a}\\
\frac{\partial f}{\partial q_b}\\
\frac{\partial f}{\partial q_c}\\
\frac{\partial f}{\partial q_d}
\end{array}\right]}
\end{align}
However, the traditional product rule is not valid for the HR calculus.
For example, $f(q)=|q|^2$, then $\frac{\partial |q|^2}{\partial q}=\frac{1}{2}q^*$ from \eqref{def:hrder},
but $ \frac{\partial |q|^2}{\partial q}\neq q\frac{\partial q^*}{\partial q}+\frac{\partial q}{\partial q}q^*=-\frac{1}{2}q+q^*$.
This difficulty has been solved within the framework of the GHR calculus, and show that it equips quaternion analysis with both the novel product rule and chain rule, see \cite{DPXU} for more details.

\begin{definition}[The GHR Derivatives \cite{DPXU}]\label{def:leftghr}
Let $q=q_a+iq_b+jq_c+kq_d$, where $q_a,q_b,q_c,q_d\in\mathbb{R}$. Then the left GHR derivatives, with respect to $q^{\mu}$ and $q^{\mu*}$ $(\mu\neq 0, \mu \in \mathbb{H})$ of the function $f$, are defined as
\begin{equation}\label{def:ghrder}
\begin{split}
\frac{\partial f}{\partial q^{\mu}}=\frac{1}{4}\left(\frac{\partial f}{\partial q_a}-\frac{\partial f}{\partial q_b}i^{\mu}-\frac{\partial f}{\partial q_c}j^{\mu}-\frac{\partial f}{\partial q_d}k^{\mu}\right)\\
\frac{\partial f}{\partial q^{\mu*}}=\frac{1}{4}\left(\frac{\partial f}{\partial q_a}+\frac{\partial f}{\partial q_b}i^{\mu}+\frac{\partial f}{\partial q_c}j^{\mu}+\frac{\partial f}{\partial q_d}k^{\mu}\right)
\end{split}
\end{equation}
and the right GHR derivatives are defined as
\begin{equation}
\begin{split}
\frac{\partial_r f}{\partial q^{\mu}}=\frac{1}{4}\left(\frac{\partial f}{\partial q_a}-i^{\mu}\frac{\partial f}{\partial q_b}-j^{\mu}\frac{\partial f}{\partial q_c}-k^{\mu}\frac{\partial f}{\partial q_d}\right)\\
\frac{\partial_r f}{\partial q^{\mu*}}=\frac{1}{4}\left(\frac{\partial f}{\partial q_a}+i^{\mu}\frac{\partial f}{\partial q_b}+j^{\mu}\frac{\partial f}{\partial q_c}+k^{\mu}\frac{\partial f}{\partial q_d}\right)
\end{split}
\end{equation}
where $\frac{\partial f}{\partial q_a}$, $\frac{\partial f}{\partial q_b}$, $\frac{\partial f}{\partial q_c}$ and $\frac{\partial f}{\partial q_d}$ are the partial derivatives of $f$ with respect to $q_a$, $q_b$, $q_c$ and $q_d$, respectively, and the set $\{1,i^{\mu},j^{\mu},k^{\mu}\}$ is a general orthogonal basis of $\mathbb{H}$.
\end{definition}

Some basic properties of the left GHR derivatives in Definition \ref{def:leftghr} (see \cite{DPXU}) are:
\begin{align}
&\textrm{Product rule}: \frac{\partial (fg)}{\partial q^{\mu}}=f\frac{\partial g}{\partial q^{\mu}}+\frac{\partial (fg)}{\partial q^{\mu}}\big|_{g=const} \label{rl:product}\\
&\textrm{Product rule}: \frac{\partial (fg)}{\partial q^{\mu*}}=f\frac{\partial g}{\partial q^{\mu*}}+\frac{\partial (fg)}{\partial q^{\mu*}}\big|_{g=const} \label{rl:product2}\\
&\textrm{Chain rule}:\frac{\partial f(g(q))}{\partial q^{\mu}}=\sum_{\nu\in\{1,i,j,k\}}\frac{\partial f}{\partial g^{\nu}}\frac{\partial g^{\nu}}{\partial q^{\mu}}\\
&\textrm{Chain rule}:\frac{\partial f(g(q))}{\partial q^{\mu*}}=\sum_{\nu\in\{1,i,j,k\}}\frac{\partial f}{\partial g^{\nu}}\frac{\partial g^{\nu}}{\partial q^{\mu*}}\\
&\textrm{Rotation rule} :   \left(\frac{\partial f}{\partial q^{\mu}}\right)^{\nu}
=\frac{\partial f^{\nu}}{\partial q^{\nu\mu}},\,\left(\frac{\partial f}{\partial q^{\mu*}}\right)^{\nu}
=\frac{\partial f^{\nu}}{\partial q^{\nu\mu*}}\\
&\quad \textrm{if $f$ is real } \left(\frac{\partial f}{\partial q^{\mu}}\right)^{\nu}
=\frac{\partial f}{\partial q^{\nu\mu}},\,\left(\frac{\partial f}{\partial q^{\mu*}}\right)^{\nu}
=\frac{\partial f}{\partial q^{\nu\mu*}}\label{rl:realrota}\\
&\textrm{Conjugate rule}:\left(\frac{\partial f}{\partial q^{\mu}}\right)^*
=\frac{\partial_r f^*}{\partial q^{\mu*}},\left(\frac{\partial f}{\partial q^{\mu*}}\right)^*=\frac{\partial_r f^*}{\partial q^{\mu}}\\
&\quad \textrm{if $f$ is real } \left(\frac{\partial f}{\partial q^{\mu}}\right)^*=\frac{\partial f}{\partial q^{\mu*}},\,
\left(\frac{\partial f}{\partial q^{\mu*}}\right)^*=\frac{\partial f}{\partial q^{\mu}}\label{rl:realconj}
\end{align}
\begin{remark}
Observe that for $\mu\in\{1,i,j,k\}$, the HR derivatives \eqref{def:hrder} and \eqref{def:hrconjder} are a special case of the right GHR derivative,
which is more concise and easier to understand.  Furthermore, the GHR derivatives
incorporate a novel product rule and chain rule, which is very convenient for calculating the GHR derivatives.
\end{remark}

\begin{remark}
Due to the non-commutativity of quaternion products, the left GHR derivative is different from the right GHR derivative. However, they will be equal if the function $f$ is real-valued. In the sequel, we mainly focus on the left GHR derivative, because it has a lot of convenient properties that are consistent with our common sense.
\end{remark}

\begin{table*}[!htbp]
  \centering
   \caption{Important results for quaternion matrix differentials}\label{tb:diffmat}
\renewcommand\arraystretch{1.3}
\begin{tabular}{|c|c|c|c|c|c|c|}
\hline
 Function  & ${\bm A}$   &$\alpha{\bm Q}\beta$&${\bm P}+{\bm Q}$ & $\textrm{Tr}({\bm Q})$& ${\bm P}{\bm Q}$ &${\bm P}\otimes{\bm Q}$ \\
\hline
 Differential &${\bm 0}$&$\alpha (d{\bm Q})\beta$&$d{\bm P}+d{\bm Q}$& $\textrm{Tr}(d{\bm Q})$&$(d{\bm P}){\bm Q}+{\bm P}(d{\bm Q})$ &$(d{\bm P})\otimes{\bm Q}+{\bm P}\otimes(d{\bm Q})$ \\
\hline
\multicolumn{7}{c}{}\vspace{-6pt} \\
\hline
Function  & ${\bm Q}^{\mu}$   &${\bm Q}^{\mu*}$&$\textrm{vec}({\bm Q})$ & $\textrm{reshape}({\bm Q})$& ${\bm Q}^{-1}$ &${\bm P}\odot{\bm Q}$ \\
\hline
Differential &$(d{\bm Q})^{\mu}$&$(d{\bm Q})^{\mu*}$
&$\textrm{vec}(d{\bm Q})$& $\textrm{reshape}(d{\bm Q})$&$-{\bm Q}^{-1}(d{\bm Q}){\bm Q}^{-1}$ &$(d{\bm P})\odot{\bm Q}+{\bm P}\odot(d{\bm Q})$ \\
\hline
\end{tabular}\label{tab:truth2}
\end{table*}

\section{Quaternion Differentials}

For a scalar function $f(q)$, where $q$ is a quaternion variable, the differential of $f(q)$ can be expressed as follows \cite{DPXU}
\begin{equation}\label{eq:scalarghr}
df =\sum_{\mu\in\{1,i,j,k\}}\frac{\partial f}{\partial q^{\mu}}dq^{\mu},\quad df =\sum_{\mu\in\{1,i,j,k\}}\frac{\partial f}{\partial q^{\mu*}}dq^{\mu*}
\end{equation}
where $\frac{\partial f}{\partial q^{\mu}}$ and $\frac{\partial f}{\partial q^{\mu*}}$ are the GHR derivatives defined in \eqref{def:ghrder}.

In like with this formula, we define the differential of an $M\times P$ matrix function ${\bm F}=[f_{mp}]$ to be
\begin{equation}
d{\bm F}\triangleq\left[\begin{array}{ccc}
df_{11}& \cdots &df_{1P}\\
\vdots  & \ddots & \vdots \\
df_{M1}& \cdots &df_{MP}
\end{array}\right]
\end{equation}
A procedure that can be used to find the differentials of ${\bm F}:\mathbb{H}^{N\times S}\rightarrow\mathbb{H}^{M\times P}$ is to calculate the difference
\begin{equation}\label{eq:diffF}
\begin{split}
{\bm F}({\bm Q}&+d{\bm Q})-{\bm F}({\bm Q})=\\
&\textrm{First-order}(d{\bm Q})+\textrm{Higher-order}(d{\bm Q})
\end{split}
\end{equation}
Then $d{\bm F}({\bm Q})=\textrm{First-order}(d{\bm Q})$, i.e., the first order terms of $d{\bm Q}$.
This definition complies with the multiplicative and associative rules
\begin{equation}\label{pr:mulassoc}
d(\alpha {\bm Q} \beta) = \alpha ( d{\bm Q})\beta,\quad d({\bm P}+{\bm Q}) = d{\bm P}+d{\bm Q}
\end{equation}
where $\alpha,\beta \in \mathbb{H}$. If ${\bm P}$ and ${\bm Q}$ are product-conforming matrices, it can be verified that the differential of their product
is
\begin{equation}
d({\bm P}{\bm Q}) = (d{\bm P}){\bm Q}+{\bm P}(d{\bm Q})
\end{equation}
Some of the most important results on quaternion matrix differentials are summarized in Table \ref{tb:diffmat}, assuming that ${\bm A}$, ${\bm B}$, and $\alpha,\beta$ to be quaternion constants, and
${\bm P},\;{\bm Q}$ to be quaternion matrix variables.
These results are a generalization of the complex matrix differentials found in \cite{Hjorungnes07} to the quaternion case.

\begin{definition}
The Moore-Penrose inverse of ${\bm Q}\in\mathbb{H}^{N\times S}$ is defined as a matrix ${\bm Q}^+\in\mathbb{H}^{S\times N}$ satisfying all of the following four criteria \cite{Horn}:
\begin{equation}
\begin{split}
({\bm Q}{\bm Q}^+)^H&={\bm Q}{\bm Q}^+,\quad ({\bm Q}^+{\bm Q})^H={\bm Q}^+{\bm Q}\\
{\bm Q}{\bm Q}^+{\bm Q}&={\bm Q},\quad {\bm Q}^+{\bm Q}{\bm Q}^+={\bm Q}^+
\end{split}
\end{equation}
where $(\cdot)^H$ denotes the Hermitian operator, or the quaternion conjugate transpose.
\end{definition}

\begin{proposition}\label{pro:dmpinver}
Let ${\bm Q}\in\mathbb{H}^{N\times S}$, then
\begin{equation}
\begin{split}
d{\bm Q}^+=-{\bm Q}^+(d{\bm Q}){\bm Q}^++{\bm Q}^+({\bm Q}^+)^H({\bm I}_N-{\bm Q}{\bm Q}^+)\\
+({\bm I}_S-{\bm Q}^+{\bm Q})(d{\bm Q}^H)({\bm Q}^+)^H{\bm Q}^+
\end{split}
\end{equation}
\end{proposition}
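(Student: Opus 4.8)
\emph{Proof idea.} My plan is to differentiate the four defining relations of ${\bm Q}^+$ and then reassemble $d{\bm Q}^+$ from an orthogonal block decomposition, in the spirit of the classical real and complex derivations but checking that every step survives the noncommutativity of $\mathbb{H}$. Throughout I would assume, as the statement implicitly requires (otherwise ${\bm Q}^+$ is not even continuous), that the rank of ${\bm Q}$ is locally constant, so that ${\bm Q}^+$ and hence $d{\bm Q}^+$ are well defined. Set ${\bm P}\triangleq{\bm Q}{\bm Q}^+$ and ${\bm R}\triangleq{\bm Q}^+{\bm Q}$; by the last two Moore--Penrose conditions these are Hermitian idempotents, so inserting ${\bm I}_S={\bm R}+({\bm I}_S-{\bm R})$ on the left of $d{\bm Q}^+$ and ${\bm I}_N={\bm P}+({\bm I}_N-{\bm P})$ on the right decomposes $d{\bm Q}^+$ into the four blocks ${\bm R}(d{\bm Q}^+){\bm P}$, ${\bm R}(d{\bm Q}^+)({\bm I}_N-{\bm P})$, $({\bm I}_S-{\bm R})(d{\bm Q}^+){\bm P}$ and $({\bm I}_S-{\bm R})(d{\bm Q}^+)({\bm I}_N-{\bm P})$. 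The elementary identities ${\bm Q}^+{\bm P}={\bm Q}^+={\bm R}{\bm Q}^+$, ${\bm Q}({\bm I}_S-{\bm R})={\bm 0}$, ${\bm Q}^+({\bm I}_N-{\bm P})={\bm 0}$, $({\bm I}_S-{\bm R}){\bm Q}^+={\bm 0}$ and ${\bm Q}^H({\bm I}_N-{\bm P})={\bm 0}=({\bm I}_S-{\bm R}){\bm Q}^H$ (the last pair from the Hermiticity of ${\bm P},{\bm R}$) are what make the bookkeeping collapse.

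For the two diagonal blocks I would differentiate ${\bm Q}^+{\bm Q}{\bm Q}^+={\bm Q}^+$ with the matrix product rule of Table~\ref{tb:diffmat}, obtaining $(d{\bm Q}^+){\bm Q}{\bm Q}^++{\bm Q}^+(d{\bm Q}){\bm Q}^++{\bm Q}^+{\bm Q}(d{\bm Q}^+)=d{\bm Q}^+$. Pre- and post-multiplying this by ${\bm R}$ on the left and ${\bm P}$ on the right, and using ${\bm Q}^+{\bm Q}={\bm R}$, ${\bm Q}{\bm Q}^+={\bm P}$, reduces it to ${\bm R}(d{\bm Q}^+){\bm P}=-{\bm Q}^+(d{\bm Q}){\bm Q}^+$, which is the first term of the claimed formula; pre- and post-multiplying the same equation by $({\bm I}_S-{\bm R})$ on the left and $({\bm I}_N-{\bm P})$ on the right annihilates every term except $({\bm I}_S-{\bm R})(d{\bm Q}^+)({\bm I}_N-{\bm P})$, which therefore vanishes.

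For the two off-diagonal blocks I would differentiate the Hermitian conditions, using that the Hermitian operator reverses products and commutes with $d(\cdot)$ (the latter from the entry for ${\bm Q}^{\mu*}$ in Table~\ref{tb:diffmat} together with the transpose). From $({\bm Q}{\bm Q}^+)^H={\bm Q}{\bm Q}^+$ one obtains $(d{\bm Q}){\bm Q}^++{\bm Q}(d{\bm Q}^+)=(d{\bm Q}^+)^H{\bm Q}^H+({\bm Q}^+)^H(d{\bm Q}^H)$; post-multiplying by $({\bm I}_N-{\bm P})$ kills the two terms containing ${\bm Q}^+({\bm I}_N-{\bm P})$ and ${\bm Q}^H({\bm I}_N-{\bm P})$, and then pre-multiplying by ${\bm Q}^+$ gives ${\bm R}(d{\bm Q}^+)({\bm I}_N-{\bm P})={\bm Q}^+({\bm Q}^+)^H(d{\bm Q}^H)({\bm I}_N-{\bm Q}{\bm Q}^+)$. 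Symmetrically, differentiating $({\bm Q}^+{\bm Q})^H={\bm Q}^+{\bm Q}$, pre-multiplying by $({\bm I}_S-{\bm R})$ and post-multiplying by ${\bm Q}^+$ yields $({\bm I}_S-{\bm R})(d{\bm Q}^+){\bm P}=({\bm I}_S-{\bm Q}^+{\bm Q})(d{\bm Q}^H)({\bm Q}^+)^H{\bm Q}^+$. Summing the four blocks gives the stated expression.

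I do not expect a serious obstacle: once the block decomposition is in place, each step is merely a left/right multiplication by an idempotent, which is legitimate in any associative algebra, so noncommutativity causes no trouble. The only facts that genuinely need to be checked in the quaternion setting are $({\bm A}{\bm B})^H={\bm B}^H{\bm A}^H$ and $d({\bm Q}^H)=(d{\bm Q})^H$ for quaternion matrices --- and both hold, by $(pq)^*=q^*p^*$ and by Table~\ref{tb:diffmat} --- together with the constant-rank hypothesis that is required for $d{\bm Q}^+$ to exist at all; stating (or verifying) the latter is really the only delicate point.
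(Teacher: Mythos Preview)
Your argument is correct and follows exactly the classical projector--decomposition approach (differentiate the four Moore--Penrose relations, split $d{\bm Q}^+$ via the Hermitian idempotents ${\bm Q}{\bm Q}^+$ and ${\bm Q}^+{\bm Q}$) that the paper invokes by reference to Proposition~1 of \cite{Hjorungnes07}; the paper itself omits the details. Incidentally, your derivation correctly produces a factor $(d{\bm Q}^H)$ in the second term, ${\bm Q}^+({\bm Q}^+)^H(d{\bm Q}^H)({\bm I}_N-{\bm Q}{\bm Q}^+)$, which is missing from the paper's displayed formula --- without it the blocks $S\times S$ and $N\times N$ do not even conform, so this is evidently a typographical slip in the paper rather than a gap in your proof.
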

\begin{proof}
The proof of Proposition \ref{pro:dmpinver} is similar to that of Proposition 1 in \cite{Hjorungnes07}, so it is omitted here.
\end{proof}

The following lemma is very important for identifying the GHR
derivatives from the differential of a quaternion matrix function.
The quaternion components, that is, the real variables ${\bm q}_a,{\bm q}_b,{\bm q}_c$ and ${\bm q}_d$ are mutually
independent and hence so are their differentials. Although the quaternion variables ${\bm q},{\bm q}^i,{\bm q}^j$
and ${\bm q}^k$ are related, it is important to notice that their differentials are linearly independent in the following way.
\begin{lemma}\label{lem:uni}
Let ${\bm Q}\in \mathbb{H}^{N\times S}$ and let ${\bm A}_n\in \mathbb{H}^{M\times NS}$. If the normal case
\begin{equation}\label{eq:a1234q}
\begin{split}
{\bm A}_1&d\textrm{vec}({\bm Q})+{\bm A}_2d\textrm{vec}({\bm Q}^{i})\\
&+{\bm A}_3d\textrm{vec}({\bm Q}^{j})+{\bm A}_4d\textrm{vec}({\bm Q}^{k})={\bm 0}
\end{split}
\end{equation}
or the conjugate case
\begin{equation}
\begin{split}
{\bm A}_1&d\textrm{vec}({\bm Q}^{*})+{\bm A}_2d\textrm{vec}({\bm Q}^{i*})\\
&+{\bm A}_3d\textrm{vec}({\bm Q}^{j*})+{\bm A}_4d\textrm{vec}({\bm Q}^{k*})={\bm 0}
\end{split}
\end{equation}
for all $d{\bm Q}\in\mathbb{H}^{N\times S}$. Then ${\bm A}_n={\bm 0}_{M\times NS}$ for $n\in\{1,2,3,4\}$.
\end{lemma}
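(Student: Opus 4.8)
The plan is to reduce the quaternion statement to a real-variable linear-independence fact, using the relations in \eqref{eq:z1qlink} that express the real components $\mathbf{q}_a,\mathbf{q}_b,\mathbf{q}_c,\mathbf{q}_d$ (applied entrywise to the matrix $\mathbf{Q}$, or equivalently to $\mathrm{vec}(\mathbf{Q})$) in terms of the four involutions $\mathbf{q},\mathbf{q}^i,\mathbf{q}^j,\mathbf{q}^k$. First I would note that since the differential operator is linear and commutes with $\mathrm{vec}$ and with each involution (see Table \ref{tb:diffmat}), the identities \eqref{eq:z1qlink} give
\begin{equation*}
d\mathrm{vec}(\mathbf{Q}^\eta)=d\mathrm{vec}(\mathbf{Q})_a+\eta\,d\mathrm{vec}(\mathbf{Q})_b\,\xi_b^\eta+\cdots
\end{equation*}
— more precisely, writing $d\mathbf{q}_a=d\mathrm{vec}(\mathbf{Q}_a)$ etc. for the four real increment vectors, each of $d\mathrm{vec}(\mathbf{Q}),d\mathrm{vec}(\mathbf{Q}^i),d\mathrm{vec}(\mathbf{Q}^j),d\mathrm{vec}(\mathbf{Q}^k)$ is an explicit $\mathbb{R}$-linear combination of $d\mathbf{q}_a,d\mathbf{q}_b,d\mathbf{q}_c,d\mathbf{q}_d$ with quaternion coefficients $1,\pm i,\pm j,\pm k$, exactly as in \eqref{eq:invijk}. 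Substituting these four expansions into \eqref{eq:a1234q} and collecting the coefficients of $d\mathbf{q}_a,d\mathbf{q}_b,d\mathbf{q}_c,d\mathbf{q}_d$ turns the hypothesis into a system of the form
\begin{equation*}
\mathbf{B}_1\,d\mathbf{q}_a+\mathbf{B}_2\,d\mathbf{q}_b+\mathbf{B}_3\,d\mathbf{q}_c+\mathbf{B}_4\,d\mathbf{q}_d=\mathbf{0}\quad\text{for all } d\mathbf{q}_a,d\mathbf{q}_b,d\mathbf{q}_c,d\mathbf{q}_d\in\mathbb{R}^{NS},
\end{equation*}
where each $\mathbf{B}_m$ is a quaternion matrix built linearly from $\mathbf{A}_1,\mathbf{A}_2,\mathbf{A}_3,\mathbf{A}_4$.

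Next I would invoke the genuine independence of the real increments: since $\mathbf{q}_a,\mathbf{q}_b,\mathbf{q}_c,\mathbf{q}_d$ are mutually independent real matrix variables, their differentials range freely and independently over $\mathbb{R}^{NS}$, so the displayed identity forces $\mathbf{B}_1=\mathbf{B}_2=\mathbf{B}_3=\mathbf{B}_4=\mathbf{0}_{M\times NS}$. (One can make this elementary by choosing $d\mathbf{q}_b=d\mathbf{q}_c=d\mathbf{q}_d=\mathbf{0}$ and letting $d\mathbf{q}_a$ run over the standard basis vectors, then permuting the roles.) It then remains to observe that the $4\times4$ coefficient matrix relating $(\mathbf{B}_1,\dots,\mathbf{B}_4)$ to $(\mathbf{A}_1,\dots,\mathbf{A}_4)$ — which is precisely the matrix appearing in the HR-derivative transform \eqref{def:hrder}, namely $\tfrac14$ times the matrix with rows $(1,-i,-j,-k),(1,-i,j,k),(1,i,-j,k),(1,i,j,-k)$ acting by right multiplication on the column $(\mathbf{A}_1,\dots,\mathbf{A}_4)^T$ in the appropriate sense — is invertible over $\mathbb{H}$. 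Hence $\mathbf{B}_n=\mathbf{0}$ for all $n$ implies $\mathbf{A}_n=\mathbf{0}$ for all $n$, which is the claim.

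For the conjugate case I would proceed identically, using the expansions of $d\mathrm{vec}(\mathbf{Q}^*),d\mathrm{vec}(\mathbf{Q}^{i*}),d\mathrm{vec}(\mathbf{Q}^{j*}),d\mathrm{vec}(\mathbf{Q}^{k*})$ in terms of $d\mathbf{q}_a,d\mathbf{q}_b,d\mathbf{q}_c,d\mathbf{q}_d$ obtained from \eqref{eq:invijk} together with $\mathbf{Q}^{\eta*}=(\mathbf{Q}^*)^\eta$; the relevant $4\times4$ coefficient matrix is the one in \eqref{def:hrconjder}, again invertible, so the same conclusion follows. Alternatively, the conjugate case follows from the normal case applied to $\mathbf{A}_1,\mathbf{A}_2^{?},\dots$ after noting $d\mathbf{q}_a=d\mathbf{q}_a^*$ and $d\mathbf{q}_b^*=-d\mathbf{q}_b$, etc., but doing it directly is cleaner.

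The only genuinely delicate point — the "hard part" — is bookkeeping the non-commutativity: the coefficients $\pm i,\pm j,\pm k$ multiply $d\mathbf{q}_b,d\mathbf{q}_c,d\mathbf{q}_d$ from a fixed side, so when one substitutes and regroups, the $\mathbf{A}_n$'s must be kept on the correct side of these scalars, and the invertibility argument for the $4\times4$ block must be phrased as invertibility of a right-multiplication operator on $\mathbb{H}^{4}$-tuples (which holds because the defining matrices in \eqref{def:hrder} and \eqref{def:hrconjder} are, up to the factor $\tfrac14$, their own inverses' transposes — indeed the HR transforms are explicitly inverted in the literature). Once that side-convention is fixed consistently, every step is routine linear algebra over $\mathbb{H}$.
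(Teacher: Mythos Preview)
Your proposal is correct and follows essentially the same route as the paper: expand each involution differential in terms of the four real-component differentials $d\mathbf{q}_a,d\mathbf{q}_b,d\mathbf{q}_c,d\mathbf{q}_d$, use their independence to force the resulting coefficient combinations to vanish, and then solve the $4\times4$ linear system for the $\mathbf{A}_n$. The paper's write-up is slightly leaner than yours: because the real increments commute with $i,j,k$, those unit factors can be stripped off immediately, leaving a $4\times4$ system in the $\mathbf{A}_n$ with only $\pm1$ entries---so your concern about non-commutativity and right-multiplication invertibility over $\mathbb{H}$ is unnecessary here.
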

\begin{proof}
Let ${\bm Q}={\bm Q}_a+i{\bm Q}_b+j{\bm Q}_c+k{\bm Q}_d\in\mathbb{H}^{N\times S}$, where ${\bm Q}_a,{\bm Q}_b,{\bm Q}_c,{\bm Q}_d\in \mathbb{R}^{N\times S}$.
From \eqref{pr:pqmu} and \eqref{pr:mulassoc}, we have $d{\bm Q}=d{\bm Q}_a+i(d{\bm Q}_b)+j(d{\bm Q}_c)+k(d{\bm Q}_d)$,
$d{\bm Q}^{i}=d{\bm Q}_a+i(d{\bm Q}_b)-j(d{\bm Q}_c)-k(d{\bm Q}_d)$, $d{\bm Q}^{j}=d{\bm Q}_a-i(d{\bm Q}_b)+j(d{\bm Q}_c)-k(d{\bm Q}_d)$ and
$d{\bm Q}^{k}=d{\bm Q}_a-i(d{\bm Q}_b)-j(d{\bm Q}_c)+k(d{\bm Q}_d)$.
By substituting $d\textrm{vec}({\bm Q}),d\textrm{vec}({\bm Q}^{i}),d\textrm{vec}({\bm Q}^{j})$ and $d\textrm{vec}({\bm Q}^{k})$ into \eqref{eq:a1234q}, we have
$({\bm A}_1+{\bm A}_2+{\bm A}_3+{\bm A}_4)d\textrm{vec}({\bm Q}_a)+({\bm A}_1+{\bm A}_2-{\bm A}_3-{\bm A}_4)id\textrm{vec}({\bm Q}_b)
+({\bm A}_1-{\bm A}_2+{\bm A}_3-{\bm A}_4)jd\textrm{vec}({\bm Q}_c)+({\bm A}_1-{\bm A}_2-{\bm A}_3+{\bm A}_4)kd\textrm{vec}({\bm Q}_d)={\bm 0}$.
Since the differentials $d{\bm Q}_a,d{\bm Q}_b,d{\bm Q}_c$ and $d{\bm Q}_d$ are independent,
then ${\bm A}_1+{\bm A}_2+{\bm A}_3+{\bm A}_4={\bm 0}$, ${\bm A}_1+{\bm A}_2-{\bm A}_3-{\bm A}_4={\bm 0}$,
${\bm A}_1-{\bm A}_2+{\bm A}_3-{\bm A}_4={\bm 0}$ and ${\bm A}_1-{\bm A}_2-{\bm A}_3+{\bm A}_4={\bm 0}$.
Hence, it follows that ${\bm A}_1={\bm A}_2={\bm A}_3={\bm A}_4={\bm 0}$. The conjugate case can be proved in a similar way.
\end{proof}

\section{Definition of quaternion matrix derivatives}

\begin{table*}[!htbp]
  \centering
   \caption{Identification table for quaternion derivatives}\label{tb:identtb}
\renewcommand\arraystretch{1.3}
\begin{tabular}{|c|c|c|c|c|}
\hline
 Function type  & Differential & Derivatives wrt $q,{\bm q},{\bm Q}$ & Derivatives wrt $q^{*},{\bm q}^{*},{\bm Q}^{*}$ & Order of derivatives \\
\hline
\multicolumn{4}{c}{}\vspace{-6pt} \\
\hline
\multirow{2}{*}{$f(q)$} &$df=\sum_{\mu\in\{1,i,j,k\}}a_{\mu}dq^{\mu}$ &\multirow{2}{*}{$\mathcal{D}_{q}f(q)=a_1$}   &\multirow{2}{*}{$\mathcal{D}_{q^{*}}f(q)=b_1$} &\multirow{2}{*}{$1\times 1$} \\
   & $df=\sum_{\mu\in\{1,i,j,k\}}b_{\mu}dq^{\mu*}$& \multirow{2}{*}{} & \multirow{2}{*}{}& \multirow{2}{*}{} \\
\hline
\multirow{2}{*}{$f({\bm q})$} &$df=\sum_{\mu\in\{1,i,j,k\}}{\bm a}^T_{\mu}d{\bm q}^{\mu}$ &\multirow{2}{*}{$\mathcal{D}_{{\bm q}}f({\bm q})={\bm a}^T_1$}   &\multirow{2}{*}{$\mathcal{D}_{{\bm q}^{*}}f({\bm q})={\bm b}^T_1$} &\multirow{2}{*}{$1\times N$} \\
   & $df=\sum_{\mu\in\{1,i,j,k\}}{\bm b}^T_{\mu}d{\bm q}^{\mu*}$& \multirow{2}{*}{} & \multirow{2}{*}{}& \multirow{2}{*}{} \\
\hline
\multirow{2}{*}{$f({\bm Q})$} &$df=\sum_{\mu\in\{1,i,j,k\}}\textrm{vec}^T({\bm A}_{\mu})d\textrm{vec}({\bm Q}^{\mu})$ &\multirow{2}{*}{$\mathcal{D}_{{\bm Q}}f({\bm Q})=\textrm{vec}^T({\bm A}_1)$}   &\multirow{2}{*}{$\mathcal{D}_{{\bm Q}^{*}}f({\bm Q})=\textrm{vec}^T({\bm B}_1)$} &\multirow{2}{*}{$1\times NS$} \\
   & $df=\sum_{\mu\in\{1,i,j,k\}}\textrm{vec}^T({\bm B}_{\mu})d\textrm{vec}({\bm Q}^{\mu*})$& \multirow{2}{*}{} & \multirow{2}{*}{}& \multirow{2}{*}{} \\
\hline
\multirow{2}{*}{$f({\bm Q})$} &$df=\sum_{\mu\in\{1,i,j,k\}}\textrm{Tr}\{{\bm A}^T_{\mu}d{\bm Q}^{\mu}\}$ &\multirow{2}{*}{$\frac{\partial f}{\partial {\bm Q}}={\bm A}_1$}   &\multirow{2}{*}{$\frac{\partial f}{\partial {\bm Q}^{*}}={\bm B}_1$} &\multirow{2}{*}{$N\times S$} \\
   & $df=\sum_{\mu\in\{1,i,j,k\}}\textrm{Tr}\{{\bm B}^T_{\mu}d{\bm Q}^{*}\}$& \multirow{2}{*}{} & \multirow{2}{*}{}& \multirow{2}{*}{} \\
\hline
\multirow{2}{*}{${\bm f}(q)$} &$d{\bm f}=\sum_{\mu\in\{1,i,j,k\}}{\bm c}_{\mu}dq^{\mu}$ &\multirow{2}{*}{$\mathcal{D}_{q}{\bm f}(q)={\bm c}_1$}   &\multirow{2}{*}{$\mathcal{D}_{q^{*}}{\bm f}(q)={\bm d}_1$} &\multirow{2}{*}{$M\times 1$} \\
   & $d{\bm f}=\sum_{\mu\in\{1,i,j,k\}}{\bm d}_{\mu}dq^{\mu*}$& \multirow{2}{*}{} & \multirow{2}{*}{}& \multirow{2}{*}{} \\
\hline
\multirow{2}{*}{${\bm f}({\bm q})$} &$d{\bm f}=\sum_{\mu\in\{1,i,j,k\}}{\bm C}_{\mu}d{\bm q}^{\mu}$ &\multirow{2}{*}{$\mathcal{D}_{{\bm q}}{\bm f}({\bm q})={\bm C}_1$}   &\multirow{2}{*}{$\mathcal{D}_{{\bm q}^{*}}{\bm f}({\bm q})={\bm D}_1$} &\multirow{2}{*}{$M\times N$} \\
   & $d{\bm f}=\sum_{\mu\in\{1,i,j,k\}}{\bm D}_{\mu}d{\bm q}^{\mu*}$& \multirow{2}{*}{} & \multirow{2}{*}{}& \multirow{2}{*}{} \\
\hline
\multirow{2}{*}{${\bm f}({\bm Q})$} &$d{\bm f}=\sum_{\mu\in\{1,i,j,k\}}{\bm \alpha}_{\mu}d\textrm{vec}({\bm Q}^{\mu})$ &\multirow{2}{*}{$\mathcal{D}_{{\bm Q}}{\bm f}({\bm q})={\bm \alpha}_1$}   &\multirow{2}{*}{$\mathcal{D}_{{\bm Q}^{*}}{\bm f}({\bm q})={\bm \beta}_1$} &\multirow{2}{*}{$M\times NS$} \\
   & $d{\bm f}=\sum_{\mu\in\{1,i,j,k\}}{\bm \beta}_{\mu}d\textrm{vec}({\bm Q}^{\mu*})$& \multirow{2}{*}{} & \multirow{2}{*}{}& \multirow{2}{*}{} \\
\hline
\multirow{2}{*}{${\bm F}(q)$} &$d\textrm{vec}({\bm F})=\sum_{\mu\in\{1,i,j,k\}}{\bm g}_{\mu}dq^{\mu}$ &\multirow{2}{*}{$\mathcal{D}_{q}{\bm F}(q)={\bm g}_1$}   &\multirow{2}{*}{$\mathcal{D}_{q^{*}}{\bm F}(q)={\bm h}_1$} &\multirow{2}{*}{$MP\times 1$} \\
   & $d\textrm{vec}({\bm F})=\sum_{\mu\in\{1,i,j,k\}}{\bm h}_{\mu}dq^{\mu*}$& \multirow{2}{*}{} & \multirow{2}{*}{}& \multirow{2}{*}{} \\
\hline
\multirow{2}{*}{${\bm F}({\bm q})$} &$d\textrm{vec}({\bm F})=\sum_{\mu\in\{1,i,j,k\}}{\bm G}_{\mu}d{\bm q}^{\mu}$ &\multirow{2}{*}{$\mathcal{D}_{{\bm q}}{\bm F}({\bm q})={\bm G}_1$}   &\multirow{2}{*}{$\mathcal{D}_{{\bm q}^{*}}{\bm F}({\bm q})={\bm H}_1$} &\multirow{2}{*}{$MP\times N$} \\
   & $d\textrm{vec}({\bm F})=\sum_{\mu\in\{1,i,j,k\}}{\bm H}_{\mu}d{\bm q}^{\mu*}$& \multirow{2}{*}{} & \multirow{2}{*}{}& \multirow{2}{*}{} \\
\hline
\multirow{2}{*}{${\bm F}({\bm Q})$} &$d\textrm{vec}({\bm F})=\sum_{\mu\in\{1,i,j,k\}}{\bm \zeta}_{\mu}d\textrm{vec}({\bm Q}^{\mu})$ &\multirow{2}{*}{$\mathcal{D}_{{\bm Q}}{\bm F}({\bm Q})={\bm \zeta}_1$}   &\multirow{2}{*}{$\mathcal{D}_{{\bm Q}^{*}}{\bm F}({\bm Q})={\bm \xi}_1$} &\multirow{2}{*}{$MP\times NS$} \\
   & $d\textrm{vec}({\bm F})=\sum_{\mu\in\{1,i,j,k\}}{\bm \xi}_{\mu}d\textrm{vec}({\bm Q}^{\mu*})$& \multirow{2}{*}{} & \multirow{2}{*}{}& \multirow{2}{*}{} \\
\hline
\end{tabular}\label{tab:truth2}
\end{table*}

This section covers differentiation of matrix functions with respect to
a matrix variable ${\bm Q}$. Note that it is always assumed that all the elements of ${\bm Q}$ are linearly independent.

We start with a scalar function $f({\bm q})$ of an $N\times 1$ vector ${\bm q}$, then the GHR derivatives of $f$ are the $1\times N$ vector
\begin{align}
\frac{\partial f}{\partial {\bm q}^{\mu}}=\left[\frac{\partial f}{\partial q^{\mu}_1},\cdots,\frac{\partial f}{\partial q^{\mu}_N}\right]\\
\frac{\partial f}{\partial {\bm q}^{\mu*}}=\left[\frac{\partial f}{\partial q^{\mu*}_1},\cdots,\frac{\partial f}{\partial q^{\mu*}_N}\right]
\end{align}
The gradients of $f$ are the vector
\begin{equation}
\nabla_{{\bm q}^{\mu}} f\triangleq \left(\frac{\partial f}{\partial {\bm q}^{\mu}}\right)^T,\quad
\nabla_{{\bm q}^{\mu*}} f\triangleq \left(\frac{\partial f}{\partial {\bm q}^{\mu*}}\right)^T
\end{equation}
If ${\bm f}$ is an $M\times 1$ vector function of ${\bm q}$, then the $M\times N$ matrices
\begin{align}
\frac{\partial {\bm f}}{\partial {\bm q}^{\mu}}=\left[\begin{array}{cc}
\frac{\partial f_1}{\partial {\bm q}^{\mu}}\\
\vdots\\
\frac{\partial f_M}{\partial {\bm q}^{\mu}}
\end{array}\right],\quad
\frac{\partial {\bm f}}{\partial {\bm q}^{\mu*}}=\left[\begin{array}{cc}
\frac{\partial f_1}{\partial {\bm q}^{\mu*}}\\
\vdots\\
\frac{\partial f_M}{\partial {\bm q}^{\mu*}}
\end{array}\right]
\end{align}
are called the derivatives or Jacobian matrices of ${\bm f}$. Generalizing these concepts to matrix functions of matrices, we arrive at the following definition.

\begin{definition}\label{def:matghr}
Let ${\bm F}:\mathbb{H}^{N\times S}\rightarrow\mathbb{H}^{M\times P}$. Then the GHR derivatives (or Jacobian matrices) of ${\bm F}$ with respect to
${\bm Q}^{\mu},{\bm Q}^{\mu*}$ ($\mu\in\mathbb{H},\mu\neq0$) are the $MP\times NS$ matrices
\begin{align}
\mathcal{D}_{{\bm Q}^{\mu}}{\bm F}=\frac{\partial \textrm{vec}{\bm F}({\bm Q})}{\partial \textrm{vec}({\bm Q}^{\mu})}\label{def:matderi}\\
\mathcal{D}_{{\bm Q}^{\mu*}}{\bm F}=\frac{\partial \textrm{vec}{\bm F}({\bm Q})}{\partial \textrm{vec}({\bm Q}^{\mu*})}\label{def:matderi2}
\end{align}
The transposes of the Jacobian matrices  $\mathcal{D}_{{\bm Q}^{\mu}}{\bm F}$ and $\mathcal{D}_{{\bm Q}^{\mu*}}{\bm F}$ are called the gradients.
\end{definition}

Using the matrix derivative notations in Definition \ref{def:matghr}, the differentials of the scalar function $f$ in \eqref{eq:scalarghr} can be extended to the following matrix case
\begin{align}
d\textrm{vec}({\bm F})=\sum_{\mu\in\{1,i,j,k\}}(\mathcal{D}_{{\bm Q}^{\mu}}{\bm F})d\textrm{vec}({\bm Q}^{\mu}) \label{eq:dvecF}\\
d\textrm{vec}({\bm F})=\sum_{\mu\in\{1,i,j,k\}}(\mathcal{D}_{{\bm Q}^{\mu*}}{\bm F})d\textrm{vec}({\bm Q}^{\mu*})\label{eq:dvecF2}
\end{align}
This is also a generalization of the complex-valued matrix variable case studied thoroughly in \cite{Hjorungnesbk} to the case of the quaternion matrix variables.
It can be shown by using Lemma \ref{lem:uni} that the matrix derivatives in \eqref{eq:dvecF} and \eqref{eq:dvecF2} are unique.

Table \ref{tb:identtb} shows the connection between the differentials and derivatives of the different function types in Table \ref{tb:funs},
which is an extension of the corresponding table given in \cite{Hjorungnes07} for complex-valued variable case. In Table \ref{tb:identtb},
$q\in \mathbb{H}$, ${\bm q}\in \mathbb{H}^{N\times 1}$, ${\bm Q}\in \mathbb{H}^{N\times S}$,
$f\in \mathbb{H}$, ${\bm f}\in \mathbb{H}^{M\times 1}$ and ${\bm F}\in \mathbb{H}^{M\times P}$. Furthermore,
$a_{\mu},b_{\mu}\in\mathbb{H}$, ${\bm a}_{\mu},{\bm b}_{\mu}\in\mathbb{H}^{N\times 1}$, ${\bm A}_{\mu},{\bm B}_{\mu}\in\mathbb{H}^{N\times S}$,
${\bm c}_{\mu},{\bm d}_{\mu}\in\mathbb{H}^{M\times 1}$, ${\bm C}_{\mu},{\bm D}_{\mu}\in\mathbb{H}^{M\times N}$,
${\bm \alpha}_{\mu},{\bm \beta}_{\mu}\in\mathbb{H}^{M\times NS}$, ${\bm g}_{\mu},{\bm h}_{\mu}\in\mathbb{H}^{MP\times 1}$,
${\bm G}_{\mu},{\bm H}_{\mu}\in\mathbb{H}^{MP\times N}$, ${\bm \zeta}_{\mu},{\bm \xi}_{\mu}\in\mathbb{H}^{MP\times NS}$, and each of these might be a fucntion
of $q,{\bm q}$ or ${\bm Q}$.

\subsection{Product Rule}
In Section \ref{sec:nonanaly}, we have given a example to show that the traditional product rule is not valid for the HR  calculus.
Now, we generalize the product rules in \eqref{rl:product} and \eqref{rl:product2} for quaternion scalar variable to the case of quaternion matrix variable.
\begin{theorem}\label{thm:prodrl}
Let ${\bm H}:\mathbb{H}^{N\times S}\rightarrow \mathbb{H}^{M\times P}$ be given by ${\bm H}={\bm F}{\bm G}$, where
 ${\bm F}:\mathbb{H}^{N\times S}\rightarrow \mathbb{H}^{M\times R}$ and ${\bm G}:\mathbb{H}^{N\times S}\rightarrow \mathbb{H}^{R\times P}$. Then the following
relations hold
\begin{align}
&\mathcal{D}_{{\bm Q}^{\mu}}{\bm H}=({\bm I}_P\otimes {\bm F})\mathcal{D}_{{\bm Q}^{\mu}}{\bm G}+\mathcal{D}_{{\bm Q}^{\mu}}(\bm{FG})|_{{\bm G}=const}\label{eq:dmatfg}\\
&\mathcal{D}_{{\bm Q}^{\mu*}}{\bm H}=({\bm I}_P\otimes {\bm F})\mathcal{D}_{{\bm Q}^{\mu*}}{\bm G}+\mathcal{D}_{{\bm Q}^{\mu*}}(\bm{FG})|_{{\bm G}=const}\label{eq:dmatfg2}
\end{align}
\end{theorem}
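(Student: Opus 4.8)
The plan is to compute the differential of $\textrm{vec}(\bm H)$ in two ways and match coefficients using the uniqueness guaranteed by Lemma \ref{lem:uni}. First I would apply the matrix product differential rule from Table \ref{tb:diffmat}, namely $d\bm H=(d\bm F)\bm G+\bm F(d\bm G)$, and then vectorize. Using the standard identity $\textrm{vec}(\bm{ABC})=(\bm C^T\otimes\bm A)\textrm{vec}(\bm B)$ — which holds over $\mathbb H$ provided one is careful about the side on which scalars act, since here $\bm A=\bm F$ sits on the left and $\bm C=\bm G$ (or $\bm I_P$) on the right — I get $\textrm{vec}((d\bm F)\bm G)=(\bm G^T\otimes\bm I_M)\textrm{vec}(d\bm F)$ and $\textrm{vec}(\bm F(d\bm G))=(\bm I_P\otimes\bm F)\textrm{vec}(d\bm G)$. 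This already isolates the factor $\bm I_P\otimes\bm F$ appearing in the statement.

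Next I would invoke the definition \eqref{eq:dvecF}–\eqref{eq:dvecF2} of the matrix GHR derivatives applied separately to $\bm F$, $\bm G$, and $\bm H=\bm F\bm G$, writing $d\textrm{vec}(\bm F)=\sum_{\mu}(\mathcal D_{\bm Q^{\mu}}\bm F)\,d\textrm{vec}(\bm Q^{\mu})$ and likewise for $\bm G$ and $\bm H$. Substituting these into the vectorized differential of $\bm H$ gives
\begin{equation*}
\sum_{\mu}(\mathcal D_{\bm Q^{\mu}}\bm H)\,d\textrm{vec}(\bm Q^{\mu})=\sum_{\mu}\Big[(\bm I_P\otimes\bm F)(\mathcal D_{\bm Q^{\mu}}\bm G)+(\bm G^T\otimes\bm I_M)(\mathcal D_{\bm Q^{\mu}}\bm F)\Big]d\textrm{vec}(\bm Q^{\mu}).
\end{equation*}
Since this holds for all $d\bm Q$, Lemma \ref{lem:uni} forces the coefficient matrices of each $d\textrm{vec}(\bm Q^{\mu})$ to agree, yielding
\begin{equation*}
\mathcal D_{\bm Q^{\mu}}\bm H=(\bm I_P\otimes\bm F)\,\mathcal D_{\bm Q^{\mu}}\bm G+(\bm G^T\otimes\bm I_M)\,\mathcal D_{\bm Q^{\mu}}\bm F.
\end{equation*}
The last step is to recognize that the second term equals $\mathcal D_{\bm Q^{\mu}}(\bm{FG})|_{\bm G=const}$: when $\bm G$ is held constant, $d\textrm{vec}(\bm H)=(\bm G^T\otimes\bm I_M)\,d\textrm{vec}(\bm F)$, so by the same identification its derivative is exactly $(\bm G^T\otimes\bm I_M)\,\mathcal D_{\bm Q^{\mu}}\bm F$. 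The conjugate case \eqref{eq:dmatfg2} is identical with $\bm Q^{\mu*}$ replacing $\bm Q^{\mu}$ throughout, using the conjugate branch of Lemma \ref{lem:uni} and \eqref{eq:dvecF2}.

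The main obstacle is the validity of the vectorization identity $\textrm{vec}(\bm{ABC})=(\bm C^T\otimes\bm A)\textrm{vec}(\bm B)$ in the noncommutative quaternion setting. Over $\mathbb H$ the entries of $\bm C^T\otimes\bm A$ are products $c_{ij}a_{kl}$, and when this matrix multiplies $\textrm{vec}(\bm B)$ these scalars multiply the entries of $\bm B$ on the left, whereas in $\bm A\bm B\bm C$ the factor $\bm C$ multiplies $\bm B$ from the right; one must check that the bookkeeping still produces the correct sums $\sum_{k,l}a_{ik}b_{kl}c_{lj}$ with factors in the right order. In fact the identity does go through because each scalar product $a_{ik}b_{kl}c_{lj}$ is associative, and the Kronecker/vec construction merely reorganizes these triple products; I would state this as a preliminary observation (or cite the corresponding entry of Table \ref{tb:diffmat} together with the Kronecker differential rule) rather than reprove it. A secondary point worth a sentence is that the ``$\bm G=const$'' term is well-defined precisely because the product differential rule is additive in $d\bm F$ and $d\bm G$, so freezing $\bm G$ simply deletes the $\bm F(d\bm G)$ contribution.
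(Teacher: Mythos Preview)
Your overall strategy---compute $d(\bm{FG})$, vectorize, expand each piece via \eqref{eq:dvecF}, and match coefficients using Lemma~\ref{lem:uni}---is exactly the paper's. But there is a genuine gap in your handling of the $(d\bm F)\bm G$ term.

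The identity $\textrm{vec}(\bm B\bm C)=(\bm C^T\otimes\bm I)\,\textrm{vec}(\bm B)$ is \emph{false} over $\mathbb H$ when $\bm C$ is quaternion-valued. Writing out the entries, the $((j-1)M+k)$-th component of $(\bm C^T\otimes\bm I_M)\,\textrm{vec}(\bm B)$ is $\sum_l (\bm C^T)_{jl}\,\bm B_{kl}=\sum_l \bm C_{lj}\bm B_{kl}$, whereas the corresponding entry of $\textrm{vec}(\bm B\bm C)$ is $\sum_l \bm B_{kl}\bm C_{lj}$. These differ unless the entries commute. Your appeal to associativity does not help: the Kronecker construction places the $\bm C$-entry to the \emph{left} of the $\bm B$-entry, so the issue is the order of the factors, not their grouping. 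By contrast, the companion identity $\textrm{vec}(\bm A\bm B)=(\bm I\otimes\bm A)\,\textrm{vec}(\bm B)$ \emph{does} hold over $\mathbb H$ (same computation gives $\sum_l \bm A_{kl}\bm B_{lj}$ on both sides), which is why the $(\bm I_P\otimes\bm F)$ term is fine.

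The paper sidesteps this by never attempting to Kronecker-factor $(d\bm F)\bm G$. It writes $d\bm H=\bm F(d\bm G)\bm I_P+d(\bm F\bm G)|_{\bm G=const}$, applies the safe left-factor identity only to the first summand, and for the second summand simply invokes \eqref{eq:dvecF} on the map $\bm Q\mapsto\bm F(\bm Q)\bm G_0$ with $\bm G_0$ frozen, producing $\mathcal D_{\bm Q^{\mu}}(\bm{FG})|_{\bm G=const}$ directly. This is exactly why the theorem is stated with the opaque ``$|_{\bm G=const}$'' term rather than the cleaner-looking $(\bm G^T\otimes\bm I_M)\,\mathcal D_{\bm Q^{\mu}}\bm F$: the latter expression is not valid in $\mathbb H$. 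To repair your argument, delete the claim $\textrm{vec}((d\bm F)\bm G)=(\bm G^T\otimes\bm I_M)\,\textrm{vec}(d\bm F)$ and instead recognize $\textrm{vec}((d\bm F)\bm G)$ as $d\,\textrm{vec}(\bm F\bm G)|_{\bm G=const}$, then expand that via \eqref{eq:dvecF}; the remainder of your proof then goes through and coincides with the paper's.
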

\begin{proof}
The differential of ${\bm H}$ can be expressed as
\begin{equation}
d{\bm H}={\bm F}(d{\bm G}){\bm I}_P+d({\bm F}{\bm G})|_{{\bm G}=const}
\end{equation}
By using the differentials of ${\bm F}$ and ${\bm G}$ after applying the $\textrm{vec}(\cdot)$, we have
\begin{align}\label{eq:dfgproc}
d\textrm{vec}({\bm H})&=({\bm I}_P\otimes {\bm F})d\textrm{vec}({\bm G})+d\textrm{vec}({\bm F}{\bm G})|_{{\bm G}=const}\nonumber\\
&=\sum_{\mu\in\{1,i,j,k\}}({\bm I}_P\otimes {\bm F})\,(\mathcal{D}_{{\bm Q}^{\mu}}{\bm G})d\textrm{vec}({\bm Q}^{\mu})\nonumber\\
&\quad +\sum_{\mu\in\{1,i,j,k\}}\mathcal{D}_{{\bm Q}^{\mu}}(\bm{FG})|_{{\bm G}=const}\,d\textrm{vec}({\bm Q}^{\mu})\nonumber\\
&=\sum_{\mu\in\{1,i,j,k\}}\big[({\bm I}_P\otimes {\bm F})\mathcal{D}_{{\bm Q}^{\mu}}{\bm G}\nonumber\\
&\quad+\mathcal{D}_{{\bm Q}^{\mu}}(\bm{FG})|_{{\bm G}=const}\big]d\textrm{vec}({\bm Q}^{\mu})
\end{align}
where \eqref{pr:pqmu} and \eqref{pr:mulassoc} are used in the last equality.
Hence, the derivatives of $\bm{FG}$ with respect to ${\bm Q}^{\mu}$ can be identified as in \eqref{eq:dmatfg}. The second equality can be proved in similar manner.
\end{proof}

\subsection{Chain Rule}
One big advantage of the matrix derivatives are defined in Definition \ref{def:matghr} is that
the chain rule can be obtained in a very simple form, and is formulated in the following theorem.
\begin{theorem}\label{thm:nwpleftchainrl}
 Let $U\subseteq \mathbb{H}^{N\times S}$ and suppose ${\bm G}:U\rightarrow \mathbb{H}^{M\times P}$ has the GHR derivatives at an interior point ${\bm Q}$ of the set $U$.
 Let $V\subseteq \mathbb{H}^{M\times P}$ be such that ${\bm G}({\bm Q})\in V$ for all ${\bm Q} \in U$.
 Assume ${\bm F}:V\rightarrow \mathbb{H}^{R\times T}$ has GHR derivatives at an inner point ${\bm G}({\bm Q})\in V$,
 then the GHR derivatives of the composite function ${\bm H}({\bm Q})\triangleq{\bm F}({\bm G}({\bm Q}))$ are as follows:
\begin{align}
\mathcal{D}_{{\bm Q}^{\mu}}{\bm H}&=\sum_{\nu\in\{1,i,j,k\}}(\mathcal{D}_{{\bm G}^{\nu}}{\bm F})(\mathcal{D}_{{\bm Q}^{\mu}}{\bm G}^{\nu})\label{eq:dmatfgcomp}\\
\mathcal{D}_{{\bm Q}^{\mu*}}{\bm H}&=\sum_{\nu\in\{1,i,j,k\}}(\mathcal{D}_{{\bm G}^{\nu}}{\bm F})(\mathcal{D}_{{\bm Q}^{\mu*}}{\bm G}^{\nu})\\
\mathcal{D}_{{\bm Q}^{\mu}}{\bm H}&=\sum_{\nu\in\{1,i,j,k\}}(\mathcal{D}_{{\bm G}^{\nu*}}{\bm F})(\mathcal{D}_{{\bm Q}^{\mu}}{\bm G}^{\nu*})\\
\mathcal{D}_{{\bm Q}^{\mu*}}{\bm H}&=\sum_{\nu\in\{1,i,j,k\}}(\mathcal{D}_{{\bm G}^{\nu*}}{\bm F})(\mathcal{D}_{{\bm Q}^{\mu*}}{\bm G}^{\nu*})
\end{align}
\end{theorem}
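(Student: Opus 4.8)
The plan is to follow the same strategy as in the proof of Theorem~\ref{thm:prodrl}: start from the scalar chain rule for the GHR derivatives, lift it to the matrix level by vectorizing, and then read off the Jacobian matrices using the uniqueness afforded by Lemma~\ref{lem:uni}. First I would write the differential of the composite function. Since ${\bm H}({\bm Q})={\bm F}({\bm G}({\bm Q}))$, the chain rule for differentials (which is nothing more than applying the scalar differential rule \eqref{eq:scalarghr} entrywise and collecting terms) gives
\begin{equation*}
d\textrm{vec}({\bm H})=\sum_{\nu\in\{1,i,j,k\}}(\mathcal{D}_{{\bm G}^{\nu}}{\bm F})\,d\textrm{vec}({\bm G}^{\nu}).
\end{equation*}
This is justified because ${\bm F}$ has GHR derivatives at the inner point ${\bm G}({\bm Q})$, so by \eqref{eq:dvecF} its differential admits exactly this expansion in the four involutions ${\bm G},{\bm G}^{i},{\bm G}^{j},{\bm G}^{k}$.

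Next I would expand each $d\textrm{vec}({\bm G}^{\nu})$ in terms of $d\textrm{vec}({\bm Q}^{\mu})$. Here the key observation is that $\nu$ ranges over $\{1,i,j,k\}$, which are fixed unit quaternions, so the map ${\bm G}\mapsto{\bm G}^{\nu}$ is itself a quaternion matrix function of ${\bm Q}$ with its own GHR derivatives; applying \eqref{eq:dvecF} to ${\bm G}^{\nu}$ yields
\begin{equation*}
d\textrm{vec}({\bm G}^{\nu})=\sum_{\mu\in\{1,i,j,k\}}(\mathcal{D}_{{\bm Q}^{\mu}}{\bm G}^{\nu})\,d\textrm{vec}({\bm Q}^{\mu}).
\end{equation*}
Substituting this into the previous display, interchanging the two finite sums, and grouping by $d\textrm{vec}({\bm Q}^{\mu})$ gives
\begin{equation*}
d\textrm{vec}({\bm H})=\sum_{\mu\in\{1,i,j,k\}}\Big[\sum_{\nu\in\{1,i,j,k\}}(\mathcal{D}_{{\bm G}^{\nu}}{\bm F})(\mathcal{D}_{{\bm Q}^{\mu}}{\bm G}^{\nu})\Big]\,d\textrm{vec}({\bm Q}^{\mu}).
\end{equation*}
Comparing this with the defining expansion \eqref{eq:dvecF} for $d\textrm{vec}({\bm H})$ and invoking the uniqueness of such an expansion (Lemma~\ref{lem:uni}, normal case), I identify the bracketed quantity as $\mathcal{D}_{{\bm Q}^{\mu}}{\bm H}$, which is \eqref{eq:dmatfgcomp}. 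The second identity follows identically by starting from the conjugate expansion \eqref{eq:dvecF2} of $d\textrm{vec}({\bm H})$ in the ${\bm Q}^{\mu*}$; the third and fourth come from using the conjugate expansion of $d\textrm{vec}({\bm F})$ in ${\bm G}^{\nu*}$ instead of ${\bm G}^{\nu}$, combined with either expansion of $d\textrm{vec}({\bm G}^{\nu*})$ in ${\bm Q}^{\mu}$ or ${\bm Q}^{\mu*}$, and then matching coefficients via Lemma~\ref{lem:uni} again.

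The main subtlety — the step I expect to require the most care — is justifying the very first display, i.e.\ that the differential of ${\bm F}({\bm G}({\bm Q}))$ really expands over the four involutions of ${\bm G}$ and only those. In the scalar GHR setting this is exactly the content of the chain rules \eqref{rl:product}--\eqref{rl:realrota} (the sum runs over $\nu\in\{1,i,j,k\}$, not over all $\mu$), and it hinges on the fact that the real components of ${\bm G}$ can be recovered linearly from ${\bm G},{\bm G}^{i},{\bm G}^{j},{\bm G}^{k}$ via \eqref{eq:z1qlink}, so that any real-differentiable dependence of ${\bm F}$ on ${\bm G}$ can be rewritten as a dependence on these four involutions. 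I would therefore either cite the scalar chain rule from \cite{DPXU} applied entrywise to $\textrm{vec}({\bm F})$, or note that the composite $\textrm{vec}({\bm F})\circ\textrm{vec}({\bm G})$ inherits the four-involution differential structure directly from Definition~\ref{def:matghr} applied to ${\bm F}$ at the inner point ${\bm G}({\bm Q})$. Everything after that is bookkeeping: a finite double sum, a swap of summation order, and one appeal to Lemma~\ref{lem:uni}.
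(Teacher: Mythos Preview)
Your proposal is correct and follows essentially the same approach as the paper: apply \eqref{eq:dvecF} to express $d\textrm{vec}({\bm H})$ in terms of $d\textrm{vec}({\bm G}^{\nu})$, expand each $d\textrm{vec}({\bm G}^{\nu})$ via \eqref{eq:dvecF} again in terms of $d\textrm{vec}({\bm Q}^{\mu})$, substitute and regroup, then identify coefficients. Your explicit appeal to Lemma~\ref{lem:uni} for the identification step and your discussion of the ``main subtlety'' are more careful than the paper, which simply cites \eqref{eq:dvecF} at each step without further comment, but the underlying argument is the same.
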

\begin{proof}
From \eqref{eq:dvecF}, we have
\begin{equation}\label{eq:dvecHnu}
d\textrm{vec}({\bm H})=d\textrm{vec}({\bm F})=\sum_{\nu\in\{1,i,j,k\}}(\mathcal{D}_{{\bm G}^{\nu}}{\bm F})d\textrm{vec}({\bm G}^{\nu})
\end{equation}
The differential of  $d\textrm{vec}({\bm G}^{\nu})$ is given by
\begin{equation}\label{eq:dvecGnu}
d\textrm{vec}({\bm G}^{\nu})=\sum_{\mu\in\{1,i,j,k\}}(\mathcal{D}_{{\bm Q}^{\mu}}{\bm G}^{\nu})d\textrm{vec}({\bm Q}^{\mu})
\end{equation}
By substituting \eqref{eq:dvecGnu} into \eqref{eq:dvecHnu}, we have
\begin{equation}
d\textrm{vec}({\bm H})=\sum_{\mu,\nu\in\{1,i,j,k\}}(\mathcal{D}_{{\bm G}^{\nu}}{\bm F})(\mathcal{D}_{{\bm Q}^{\mu}}{\bm G}^{\nu})d\textrm{vec}({\bm Q}^{\mu})
\end{equation}
According to \eqref{eq:dvecF}, the derivatives of ${\bm H}$ with respect to ${\bm Q}^{\mu}$ can now be identified as in \eqref{eq:dmatfgcomp}. The
other equalities can be proved in similar manner.
\end{proof}

\section{Quaternion optimization theory}
In engineering, objective functions of interest are often real-valued and non-analytic, so it is important to find stationary points for scalar real-valued functions dependent on quaternion matrices
and the direction where such functions have maximum rates of change.

\subsection{Stationary Points}
This subsection shows that five equivalent ways can be used to find stationary points of $f({\bm Q})\in\mathbb{R}$,
which are the necessary conditions for optimality.

\begin{lemma}\label{lem:realconj}
Let $f:\mathbb{H}^{N\times S}\rightarrow \mathbb{R}$. Then the following holds
\begin{equation}\label{eq:realconjnu}
\left(\mathcal{D}_{{\bm Q}}f\right)^{\nu}=\mathcal{D}_{{\bm Q}^{\nu}}f,\quad \mathcal{D}_{{\bm Q}^{\nu*}}f=\left(\mathcal{D}_{{\bm Q}}f\right)^{\nu*}
\end{equation}
\end{lemma}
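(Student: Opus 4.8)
The plan is to reduce the matrix identity to the scalar GHR rules of Section~\ref{sec:nonanaly}, applied componentwise. Write ${\bm Q}=[q_{ns}]$ and recall that, since $f$ is scalar, $\mathcal{D}_{{\bm Q}}f=\partial\,\textrm{vec}f/\partial\,\textrm{vec}({\bm Q})$ is the $1\times NS$ row vector whose entries are the scalar GHR derivatives $\partial f/\partial q_{ns}$, ordered as dictated by $\textrm{vec}(\cdot)$; likewise the entries of $\mathcal{D}_{{\bm Q}^{\nu}}f$ are $\partial f/\partial q_{ns}^{\nu}$ and those of $\mathcal{D}_{{\bm Q}^{\nu*}}f$ are $\partial f/\partial q_{ns}^{\nu*}$, because the rotation $(\cdot)^{\nu}$ acts entrywise on $\textrm{vec}({\bm Q})$. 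Since the rotation of a matrix is also defined entrywise, it suffices to establish both claimed identities one scalar entry at a time, with $q=q_{ns}$ an ordinary quaternion variable and $f$ regarded as a real-valued function of that single variable (the other entries held fixed).

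For the first identity I would invoke the rotation rule for real functions \eqref{rl:realrota} with $\mu=1$, giving $\left(\partial f/\partial q\right)^{\nu}=\left(\partial f/\partial q^{1}\right)^{\nu}=\partial f/\partial q^{\nu\cdot1}=\partial f/\partial q^{\nu}$; reassembling the entries yields $\left(\mathcal{D}_{{\bm Q}}f\right)^{\nu}=\mathcal{D}_{{\bm Q}^{\nu}}f$. For the second, recall from \eqref{pr:def1qmunu} that $(\cdot)^{\nu*}$ means $((\cdot)^{*})^{\nu}=((\cdot)^{\nu})^{*}$. Apply first the conjugate rule for real functions \eqref{rl:realconj} to get $\left(\partial f/\partial q\right)^{*}=\partial f/\partial q^{*}$, then the second identity in \eqref{rl:realrota} with $\mu=1$ to get $\left(\partial f/\partial q^{*}\right)^{\nu}=\left(\partial f/\partial q^{1*}\right)^{\nu}=\partial f/\partial q^{\nu\cdot1\,*}=\partial f/\partial q^{\nu*}$. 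Chaining these gives $\left(\partial f/\partial q\right)^{\nu*}=\partial f/\partial q^{\nu*}$, and reassembling the entries gives $\mathcal{D}_{{\bm Q}^{\nu*}}f=\left(\mathcal{D}_{{\bm Q}}f\right)^{\nu*}$.

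The only genuinely delicate point — the step I would check most carefully — is the bookkeeping of this componentwise reduction: one must verify that $\textrm{vec}({\bm Q}^{\mu})=(\textrm{vec}({\bm Q}))^{\mu}$, so that the $l$-th entry of $\mathcal{D}_{{\bm Q}^{\mu}}f$ really is the scalar GHR derivative $\partial f/\partial q_{l}^{\mu}$, and that $\partial f/\partial q_{l}$ is a bona fide scalar GHR derivative of a real-valued function of the single quaternion $q_{l}$, so that \eqref{rl:realrota} and \eqref{rl:realconj} apply verbatim. Once this alignment is in place the argument is purely algebraic, requiring no limiting or analytic considerations beyond the already-established scalar rules.
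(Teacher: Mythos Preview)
Your proposal is correct and follows essentially the same approach as the paper: the paper's proof simply says ``Using \eqref{rl:realrota} and \eqref{rl:realconj}, the lemma follows,'' and your argument is precisely the componentwise unpacking of that invocation. Your added care about the alignment $\textrm{vec}({\bm Q}^{\mu})=(\textrm{vec}({\bm Q}))^{\mu}$ is a welcome clarification, but it does not represent a different route.
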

\begin{proof}
Using \eqref{rl:realrota} and \eqref{rl:realconj}, the lemma follows.
\end{proof}

\begin{theorem}
Let $f:\mathbb{H}^{N\times S}\rightarrow \mathbb{R}$, and let
${\bm Q}={\bm Q}_a+i{\bm Q}_b+j{\bm Q}_c+k{\bm Q}_d$, where ${\bm Q}_a,{\bm Q}_b,{\bm Q}_c,{\bm Q}_d\in \mathbb{R}^{N\times S}$.
A stationary point of the function $f({\bm Q})=g({\bm \xi})$ can be founded by one of the following five equivalent conditions
\begin{align}
&\mathcal{D}_{{\bm \xi}}g({\bm \xi})={\bm 0}\;\Leftrightarrow\; \mathcal{D}_{{\bm \zeta}}f({\bm Q})={\bm 0}\;\Leftrightarrow\; \mathcal{D}_{{\bm Q}}f({\bm Q})={\bm 0}\label{eq:equv1}\\
&\mathcal{D}_{{\bm \xi}}g({\bm \xi})={\bm 0}\;\Leftrightarrow\;\mathcal{D}_{{\bm \zeta}^*}f({\bm Q})={\bm 0}\;\Leftrightarrow\; \mathcal{D}_{{\bm Q}^*}f({\bm Q})={\bm 0}
\end{align}
where ${\bm \xi}=\left[{\bm Q}_a,{\bm Q}_b,{\bm Q}_c,{\bm Q}_d\right]$ and ${\bm \zeta}=\left[{\bm Q},{\bm Q}^i,{\bm Q}^j,{\bm Q}^k\right]$.
\end{theorem}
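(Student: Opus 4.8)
The plan is to establish the chain of equivalences by reducing everything to the relationship between the quaternion matrix derivatives and the real derivatives with respect to the components $\bm{Q}_a,\bm{Q}_b,\bm{Q}_c,\bm{Q}_d$, and then invoking the scalar GHR properties entrywise. First I would note that $f({\bm Q})=g({\bm \xi})$ means $f$ is literally a real-valued function of the four real matrix variables, so the classical real-matrix theory (as in \cite{Magnus,Brewer,Harville}) gives that $\bm{Q}$ is a stationary point if and only if $\mathcal{D}_{{\bm \xi}}g({\bm \xi})=\bm{0}$, i.e.\ all four real partial-derivative blocks vanish. This is the anchor of the equivalence: every other condition must be shown equivalent to the simultaneous vanishing of the real blocks $\partial f/\partial {\bm Q}_a,\dots,\partial f/\partial {\bm Q}_d$.

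Next I would express $\mathcal{D}_{{\bm \zeta}}f({\bm Q})$ in terms of these real blocks. By the entrywise definition \eqref{def:ghrder} of the left GHR derivative (now read as a matrix of scalar GHR derivatives), the four sub-blocks $\mathcal{D}_{\bm Q}f,\mathcal{D}_{{\bm Q}^i}f,\mathcal{D}_{{\bm Q}^j}f,\mathcal{D}_{{\bm Q}^k}f$ are obtained from $\partial f/\partial{\bm Q}_a,\dots,\partial f/\partial{\bm Q}_d$ by the same invertible $4\times4$ coefficient matrix (with quaternion entries) that appears in the HR-derivative table \eqref{def:hrder}. Because that matrix is invertible over $\mathbb{H}$, the vanishing of all four GHR blocks is equivalent to the vanishing of all four real blocks; in particular $\mathcal{D}_{{\bm \zeta}}f({\bm Q})=\bm{0}\Leftrightarrow\mathcal{D}_{{\bm \xi}}g({\bm \xi})=\bm{0}$. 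The same argument applied to the conjugate HR matrix \eqref{def:hrconjder} gives $\mathcal{D}_{{\bm \zeta}^*}f({\bm Q})=\bm{0}\Leftrightarrow\mathcal{D}_{{\bm \xi}}g({\bm \xi})=\bm{0}$.

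It remains to collapse the four-block condition $\mathcal{D}_{{\bm \zeta}}f=\bm{0}$ to the single-block condition $\mathcal{D}_{\bm Q}f=\bm{0}$ (and similarly for the conjugate). Here I would use Lemma~\ref{lem:realconj}: for real $f$ one has $\mathcal{D}_{{\bm Q}^{\nu}}f=(\mathcal{D}_{\bm Q}f)^{\nu}$ for each $\nu\in\{1,i,j,k\}$, and likewise $\mathcal{D}_{{\bm Q}^{\nu*}}f=(\mathcal{D}_{\bm Q}f)^{\nu*}$. Since the quaternion involution $(\cdot)^{\nu}$ (and its conjugate) is a bijection of $\mathbb{H}$ fixing $\bm{0}$, $\mathcal{D}_{\bm Q}f=\bm{0}$ forces every $\mathcal{D}_{{\bm Q}^{\nu}}f=\bm{0}$, and conversely taking $\nu=1$ recovers $\mathcal{D}_{\bm Q}f$; thus $\mathcal{D}_{\bm Q}f({\bm Q})=\bm{0}\Leftrightarrow\mathcal{D}_{{\bm \zeta}}f({\bm Q})=\bm{0}$, and $\mathcal{D}_{{\bm Q}^*}f({\bm Q})=\bm{0}\Leftrightarrow\mathcal{D}_{{\bm \zeta}^*}f({\bm Q})=\bm{0}$ (using $\mathcal{D}_{{\bm Q}^*}f=(\mathcal{D}_{\bm Q}f)^*$ for real $f$ from \eqref{rl:realconj}). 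Stringing these together yields the two displayed chains. The main obstacle I anticipate is purely bookkeeping: making sure the coefficient matrix passing between the real blocks and the GHR/conjugate-GHR blocks is genuinely invertible over the noncommutative ring $\mathbb{H}$ (it is, being $\tfrac14$ times a matrix whose product with its conjugate-transpose pattern is the identity), and keeping the left/right placement of the $i^{\mu},j^{\mu},k^{\mu}$ factors consistent throughout — no step is conceptually deep once Lemma~\ref{lem:realconj} is in hand.
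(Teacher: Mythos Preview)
Your proposal is correct and follows essentially the same logic as the paper: establish an invertible linear relation between $\mathcal{D}_{{\bm \zeta}}f$ and $\mathcal{D}_{{\bm \xi}}g$, then use Lemma~\ref{lem:realconj} to collapse the four-block condition to a single block. The only methodological difference is that the paper derives the linear relation via the matrix chain rule (Theorem~\ref{thm:nwpleftchainrl}), obtaining $(\mathcal{D}_{{\bm \zeta}}f){\bm J}=\mathcal{D}_{{\bm \xi}}g$ with $4{\bm J}{\bm J}^H={\bm I}_{4NS}$, whereas you read it off directly from the entrywise GHR definition~\eqref{def:ghrder}; these are equivalent and your route is arguably more elementary.
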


\begin{proof}
In \cite{Magnus}, a stationary point is defined as point where the derivatives of the function are equal to the null vector.
Thus, $\mathcal{D}_{{\bm \xi}}g({\bm \xi})={\bm 0}$ gives a stationary point by definition. Using the chain rule in \eqref{eq:dmatfgcomp} on both sides of $f({\bm Q})=g({\bm \xi})$, we obtain
\begin{align}\label{eq:dfgQxi}
\sum_{\nu\in\{1,i,j,k\}}(\mathcal{D}_{{\bm Q}^{\nu}}f)(\mathcal{D}_{{\bm \xi}}{\bm Q}^{\nu})=\mathcal{D}_{{\bm \xi}}g
\end{align}
From ${\bm Q}^{\nu}={\bm Q}_a+i^{\nu}{\bm Q}_b+j^{\nu}{\bm Q}_c+k^{\nu}{\bm Q}_d$, it follows that $\mathcal{D}_{{\bm \xi}}{\bm Q}^{\nu}=\left[{\bm I},i^{\nu}{\bm I},j^{\nu}{\bm I},k^{\nu}{\bm I}\right]$, where ${\bm I}\in\mathbb{R}^{NS\times NS}$ is the identity matrix. By substituting these results into \eqref{eq:dfgQxi}, we have
\begin{align}\label{eq:DfzetaJ}
(\mathcal{D}_{{\bm \zeta}}f){\bm J}=(\mathcal{D}_{{\bm \zeta}}f)\left[\begin{array}{cccc}
{\bm I} & i{\bm I}  & j{\bm I} & k{\bm I} \\
{\bm I} & i{\bm I}  & -j{\bm I} & -k{\bm I} \\
{\bm I} & -i{\bm I}  & j{\bm I} & -k{\bm I} \\
{\bm I} & -i{\bm I}  & -j{\bm I} & k{\bm I}
\end{array}\right] =\mathcal{D}_{{\bm \xi}}g
\end{align}
where $\mathcal{D}_{{\bm \zeta}}f=\left[\mathcal{D}_{{\bm Q}}f,\mathcal{D}_{{\bm Q}^i}f,\mathcal{D}_{{\bm Q}^j}f,\mathcal{D}_{{\bm Q}^k}f\right]$ and $4{\bm J}{\bm J}^H={\bm I}_{4NS}$.
From \eqref{eq:DfzetaJ}, the equalities in \eqref{eq:equv1} are equivalent. The other equivalent relations can be proved by Lemma \ref{lem:realconj}.
\end{proof}

\subsection{Direction of maximum rate of change}
The next theorem states how to find the maximum rate of change of $f({\bm Q})\in\mathbb{R}$, which is widely applied in steepest descent methods, such as quaternion adaptive filters.
\begin{theorem}\label{thm:extchang}
Let $f:\mathbb{H}^{N\times S}\rightarrow \mathbb{R}$. Then the gradient $[\mathcal{D}_{{\bm Q}^*}f({\bm Q})]^T=[\mathcal{D}_{{\bm Q}}f({\bm Q})]^H$ defines the direction of the maximum rate of change
of $f$ with respect to $\textrm{vec}({\bm Q})$.
\end{theorem}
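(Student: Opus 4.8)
The plan is to show that the first-order increment $df$ produced by a perturbation $d\textrm{vec}({\bm Q})$ is, up to a fixed positive constant, the real inner product of $d\textrm{vec}({\bm Q})$ against the gradient $[\mathcal{D}_{{\bm Q}^{*}}f]^{T}$, and then to apply the Cauchy--Schwarz inequality for the quaternion inner product $\langle{\bm u},{\bm v}\rangle=\mathfrak{R}({\bm u}^{H}{\bm v})$ on $\mathbb{H}^{NS}$ (the natural extension of $\langle p,q\rangle=\mathfrak{R}(p^{*}q)$), with $\|{\bm u}\|^{2}=\langle{\bm u},{\bm u}\rangle$.

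First I would expand the differential in real coordinates. Writing ${\bm Q}={\bm Q}_a+i{\bm Q}_b+j{\bm Q}_c+k{\bm Q}_d$ and $f({\bm Q})=g({\bm Q}_a,{\bm Q}_b,{\bm Q}_c,{\bm Q}_d)$, one has $df={\bm a}\,d\textrm{vec}({\bm Q}_a)+{\bm b}\,d\textrm{vec}({\bm Q}_b)+{\bm c}\,d\textrm{vec}({\bm Q}_c)+{\bm d}\,d\textrm{vec}({\bm Q}_d)$, where ${\bm a},{\bm b},{\bm c},{\bm d}$ are the real $1\times NS$ rows of partial derivatives. Applying Definition \ref{def:leftghr} entrywise with $\mu=1$ gives $\mathcal{D}_{{\bm Q}^{*}}f=\tfrac14({\bm a}+i{\bm b}+j{\bm c}+k{\bm d})$, hence $\overline{\mathcal{D}_{{\bm Q}^{*}}f}=([\mathcal{D}_{{\bm Q}^{*}}f]^{T})^{H}=\tfrac14({\bm a}-i{\bm b}-j{\bm c}-k{\bm d})$ (here the real rows commute with the imaginary units).

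Next I would compute $\overline{\mathcal{D}_{{\bm Q}^{*}}f}\;d\textrm{vec}({\bm Q})$ with $d\textrm{vec}({\bm Q})=d\textrm{vec}({\bm Q}_a)+i\,d\textrm{vec}({\bm Q}_b)+j\,d\textrm{vec}({\bm Q}_c)+k\,d\textrm{vec}({\bm Q}_d)$ and take the real part. The key observation is that of the sixteen terms, the four diagonal ones are real (e.g. $-i{\bm b}\cdot i\,d\textrm{vec}({\bm Q}_b)={\bm b}\,d\textrm{vec}({\bm Q}_b)$, using $-i^{2}=1$) and sum to $\tfrac14 df$, whereas the twelve mixed terms each carry a single leftover imaginary unit ($i$, $j$, $k$, or $ij=k$, $ik=-j$, etc.) multiplying a real scalar and are therefore purely imaginary, contributing nothing to the real part. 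This yields $df=4\,\mathfrak{R}\!\big(\overline{\mathcal{D}_{{\bm Q}^{*}}f}\;d\textrm{vec}({\bm Q})\big)=4\,\big\langle\,[\mathcal{D}_{{\bm Q}^{*}}f]^{T},\,d\textrm{vec}({\bm Q})\,\big\rangle$, the quaternion analogue of $df=2\mathfrak{R}(\overline{\partial_{z^{*}}f}\,dz)$ in the complex case.

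Finally, holding the step length $\|d\textrm{vec}({\bm Q})\|$ fixed, the quaternion Cauchy--Schwarz inequality gives $\mathfrak{R}(({[\mathcal{D}_{{\bm Q}^{*}}f]^{T}})^{H}d\textrm{vec}({\bm Q}))\le |({[\mathcal{D}_{{\bm Q}^{*}}f]^{T}})^{H}d\textrm{vec}({\bm Q})|\le\|[\mathcal{D}_{{\bm Q}^{*}}f]^{T}\|\,\|d\textrm{vec}({\bm Q})\|$, with equality throughout precisely when $d\textrm{vec}({\bm Q})$ is a nonnegative real multiple of $[\mathcal{D}_{{\bm Q}^{*}}f]^{T}$; hence $df$ is maximized exactly along that direction (and minimized along its negative, the steepest-descent direction used by quaternion adaptive filters). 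The identity $[\mathcal{D}_{{\bm Q}^{*}}f]^{T}=[\mathcal{D}_{{\bm Q}}f]^{H}$ then follows from Lemma \ref{lem:realconj} with $\nu=1$, since $\mathcal{D}_{{\bm Q}^{*}}f=(\mathcal{D}_{{\bm Q}}f)^{*}$ and $(A^{*})^{T}=A^{H}$. The main obstacle is the bookkeeping in the real-part computation of the third step: one has to track the full quaternion multiplication table to confirm that every cross term is imaginary and that the constant is exactly $4$; once that identity is in hand, the remainder is a routine application of Cauchy--Schwarz.
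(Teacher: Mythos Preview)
Your proof is correct and follows the same overall arc as the paper's: establish the identity $df=4\,\langle [\mathcal{D}_{{\bm Q}^{*}}f]^{T},\,d\textrm{vec}({\bm Q})\rangle$ and then apply Cauchy--Schwarz. The difference lies in how that identity is reached. You expand $\overline{\mathcal{D}_{{\bm Q}^{*}}f}\,d\textrm{vec}({\bm Q})$ directly in the real coordinates $({\bm Q}_a,{\bm Q}_b,{\bm Q}_c,{\bm Q}_d)$ and verify term by term that the twelve cross products are purely imaginary while the four diagonal ones sum to $\tfrac14\,df$. The paper instead starts from the expansion $df=\sum_{\mu\in\{1,i,j,k\}}(\mathcal{D}_{{\bm Q}^{\mu}}f)\,d\textrm{vec}({\bm Q}^{\mu})$, invokes Lemma~\ref{lem:realconj} and the multiplicativity of quaternion rotation to rewrite each summand as $[(\mathcal{D}_{{\bm Q}}f)\,d\textrm{vec}({\bm Q})]^{\mu}$, and then uses the involution identity $\sum_{\mu\in\{1,i,j,k\}}x^{\mu}=4\mathfrak{R}(x)$ to collapse the sum to $4\mathfrak{R}[(\mathcal{D}_{{\bm Q}}f)\,d\textrm{vec}({\bm Q})]$ in one stroke. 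Your route is more elementary and self-contained, requiring nothing beyond the definition of the GHR derivative and the quaternion multiplication table, at the cost of the sixteen-term bookkeeping you acknowledge; the paper's route is shorter and stays within the involution machinery developed earlier, so the constant $4$ and the real-part structure fall out without any explicit expansion.
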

\begin{proof}
From \eqref{eq:dvecF}, \eqref{eq:realconjnu}, we have
\begin{equation}\label{eq:dfmatnu}
\begin{split}
df &= \sum_{\mu\in\{1,i,j,k\}}(\mathcal{D}_{{\bm Q}^{\mu}}f)d\textrm{vec}({\bm Q}^{\mu})\\
&= \sum_{\mu\in\{1,i,j,k\}}(\mathcal{D}_{{\bm Q}}f)^{\mu}(d\textrm{vec}({\bm Q}))^{\mu}
\end{split}
\end{equation}
Using \eqref{pr:pqmu}, \eqref{eq:z1qlink} and \eqref{eq:realconjnu}, we can further get
\begin{equation}\label{eq:dfredqf}
\begin{split}
df& = \sum_{\mu\in\{1,i,j,k\}}\left[(\mathcal{D}_{{\bm Q}}f)d\textrm{vec}({\bm Q})\right]^{\mu}\\
&= 4\mathfrak{R}\left[(\mathcal{D}_{{\bm Q}}f)d\textrm{vec}({\bm Q})\right]= 4\mathfrak{R}\left[(\mathcal{D}_{{\bm Q}^*}f)^*d\textrm{vec}({\bm Q})\right]\\
&= 4\left<(\mathcal{D}_{{\bm Q}^*}f)^T,d\textrm{vec}({\bm Q})\right>
\end{split}
\end{equation}
where $<\cdot,\cdot>$ is the Euclidean inner product between real vector in $\mathbb{R}^{4NS\times 1}$.
By applying the Cauchy-Schwartz inequality to \eqref{eq:dfredqf}, we obtain
\begin{eqnarray}
|df|=4\left|\left<(\mathcal{D}_{{\bm Q}^*}f)^T,d\textrm{vec}({\bm Q})\right>\right|\leq4\left\|\mathcal{D}_{{\bm Q}}f\right\|\left\|d\textrm{vec}({\bm Q})\right\|
\end{eqnarray}
which indicates that the maximum change of $f$ occurs when $d\textrm{vec}({\bm Q})$ is in the direction of $(\mathcal{D}_{{\bm Q}^*}f)^T=(\mathcal{D}_{{\bm Q}}f)^H$ from \eqref{eq:realconjnu}.
Thus, the steepest descent method can be expressed as
\begin{equation}\label{eq:stdesc}
\textrm{vec}({\bm Q}_{n+1})=\textrm{vec}({\bm Q}_{n})-\eta(\mathcal{D}_{{\bm Q}^*}f({\bm Q}_{n}))^T,\quad f\in\mathbb{R}
\end{equation}
where $\eta>0$ is the step size, and ${\bm Q}_{n+1}$ is the value of the unknown
matrix after $n$ iterations.
\end{proof}

\begin{table*}[!htbp]
  \centering
\caption{Derivatives of functions of the type $f(q)$}\label{tb:fderiv}
\renewcommand{\arraystretch}{1.3} 
\begin{tabular}{|c|c|c|c|}
 \hline
$f(q)$    & $\mathcal{D}_{q}f$ & $\mathcal{D}_{q^{*}}f$ & Note  \\
 \hline
 \multicolumn{4}{c}{}\vspace{-6pt} \\
\hline
$q$  & $1$  &$-\frac{1}{2}$& $--$ \\
 \hline
$\alpha q$  & $\alpha$  &$-\frac{1}{2}\alpha$& $\forall\alpha\in\mathbb{H}$ \\
 \hline
$q\beta$  & $\mathfrak{R}(\beta)$  &$-\frac{1}{2}\beta^*$&$\forall\beta\in\mathbb{H}$ \\
 \hline
$\alpha q\beta+\lambda$  & $\alpha\mathfrak{R}(\beta)$  &$-\frac{1}{2}\alpha\beta^*$&$\forall\alpha,\beta,\lambda\in\mathbb{H}$ \\
 \hline
$q^*$  & $-\frac{1}{2}$  &$1$ &$--$ \\
 \hline
$\alpha q^*$  & $-\frac{1}{2}\alpha$  &$\alpha$ &$\forall\alpha\in\mathbb{H}$ \\
 \hline
$q^*\beta$  & $-\frac{1}{2}\beta^*$  &$\mathfrak{R}(\beta)$& $\forall\beta\in\mathbb{H}$ \\
 \hline
$\alpha q^*\beta+\lambda$  & $-\frac{1}{2}\alpha\beta^*$  &$\alpha\mathfrak{R}(\beta)$& $\forall\alpha,\beta,\lambda\in\mathbb{H}$ \\
 \hline
 $q\alpha q\beta$  & $q\alpha\mathfrak{R}(\beta)+\mathfrak{R}(\alpha q\beta)$  &$-\frac{1}{2}q\alpha\beta^*-\frac{1}{2}(\alpha q\beta)^*$& $\forall\alpha,\beta\in\mathbb{H}$ \\
 \hline
 $q\alpha q^*\beta$  & $-\frac{1}{2}q\alpha\beta^*+\mathfrak{R}(\alpha q^*\beta)$  &$q\alpha\mathfrak{R}(\beta)-\frac{1}{2}(\alpha q^*\beta)^*$& $\forall\alpha,\beta\in\mathbb{H}$ \\
 \hline
 $q^*\alpha q\beta$  & $q^*\alpha\mathfrak{\beta}-\frac{1}{2}(\alpha q\beta)^*$  &$-\frac{1}{2}q^*\alpha\beta^*+\mathfrak{R}(\alpha q\beta)$& $\forall\alpha,\beta\in\mathbb{H}$ \\
 \hline
 $q^*\alpha q^*\beta$  & $-\frac{1}{2}q^*\alpha\beta^*-\frac{1}{2}(\alpha q^*\beta)^*$  &$q^*\alpha\mathfrak{R}(\beta)+\mathfrak{R}(\alpha q^*\beta)$& $\forall\alpha,\beta\in\mathbb{H}$ \\
 \hline
 $|V_q|$  & $-\frac{1}{4} \frac{V_q}{|V_q|}$  &$\frac{1}{4} \frac{V_q}{|V_q|}$ &$V_q=\mathfrak{I}(q)$  \\
 \hline
  $\frac{V_q}{|V_q|}$  & $\frac{1}{2|V_q|}$  &$-\frac{1}{2|V_q|}$ &$--$ \\
  \hline
 $\arctan\left(\frac{|V_q|}{S_q}\right)$  & $-\frac{\hat{q}q^*}{4|q|^2}$  &$\frac{\hat{q}q}{4|q|^2}$ &$S_q=\mathfrak{R}(q)$ \\
 \hline
 $q^{-1}$  & $-q^{-1}\mathfrak{R}(q^{-1})$  &$\frac{1}{2|q|^2}$ &$--$ \\
 \hline
$(q^*)^{-1}$  & $\frac{1}{2|q|^2}$  &$-f\mathfrak{R}(f)$ &$--$ \\
 \hline
$(\alpha q \beta+\lambda)^{-1}$  & $-f\alpha\mathfrak{R}(\beta f)$  &$\frac{1}{2}f\alpha(\beta f)^*$ &$\forall\alpha,\beta,\lambda\in\mathbb{H}$ \\
 \hline
$(\alpha q^*\beta+\lambda)^{-1}$  & $\frac{1}{2}f\alpha(\beta f)^*$  &$-f\alpha\mathfrak{R}(\beta f)$ & $\forall\alpha,\beta,\lambda\in\mathbb{H}$ \\
 \hline
$q^2$  & $q+\mathfrak{R}( q)$  &$-\frac{1}{2}q-\frac{1}{2}( q)^*$& $--$  \\
 \hline
$(q^*)^2$  & $-\frac{1}{2}q^*-\frac{1}{2}(q^*)^*$  &$q^*+\mathfrak{R}(q^*)$ & $--$\\
 \hline
$(\alpha q\beta+\lambda)^2$  & $g\alpha\mathfrak{R}(\beta)+\alpha\mathfrak{R}(\beta g)$  &$-\frac{1}{2}g\alpha\beta^*-\frac{1}{2}\alpha(\beta g)^*$& $g=\alpha q\beta+\lambda$ \\
 \hline
$(\alpha q^*\beta+\lambda)^2$  & $-\frac{1}{2}g\alpha\beta^*-\frac{1}{2}\alpha(\beta g)^*$  &$g\alpha\mathfrak{R}(\beta)+\alpha\mathfrak{R}(\beta g)$ & $g=\alpha q^*\beta+\lambda$\\
 \hline
$\mathfrak{R}(q)$  & $\frac{1}{4}$  &$\frac{1}{4}$ & $--$\\
 \hline
 $\mathfrak{R}(\alpha q\beta+\lambda)$  & $\frac{1}{4}\beta\alpha$  &$\frac{1}{4}\alpha^*\beta^*$ & $\forall\alpha,\beta,\lambda\in\mathbb{H}$\\
 \hline
$\mathfrak{R}(\alpha q^*\beta+\lambda)$  & $\frac{1}{4}\alpha^*\beta^*$ & $\frac{1}{4}\beta\alpha$ & $\forall\alpha,\beta,\lambda\in\mathbb{H}$\\
 \hline
$\frac{q}{|q|}$  & $\frac{3}{4|q|}$  &$-\frac{1}{2|q|}-\frac{1}{4|q|^3}q^2$ & $--$\\
 \hline
$\frac{q^*}{|q|}$  & $-\frac{1}{2|q|}-\frac{1}{4|q|^3}(q^*)^2$  &$\frac{3}{4|q|}$ & $--$\\
 \hline
$\frac{\alpha q\beta+\lambda}{|\alpha q\beta+\lambda|}$  & $\frac{\alpha}{2|g|}\mathfrak{R}(\beta)+\frac{g}{4|g|^3}\beta^*(\alpha^*g)^*$  &$-\frac{\alpha}{4|g|}\beta^*-\frac{g}{2|g|^3}\beta^*\mathfrak{R}(\alpha^*g)$& $g=\alpha q\beta+\lambda$ \\
 \hline
$\frac{\alpha q^*\beta+\lambda}{|\alpha q^*\beta+\lambda|}$  & $-\frac{\alpha}{2|g|}\beta^*-\frac{f}{|g|}\frac{\partial |g|}{\partial q^{}}$  &$\frac{\alpha}{|g|}\mathfrak{R}(\beta)-\frac{f}{|g|}\frac{\partial |g|}{\partial q^{*}}$ & $g=\alpha q^*\beta+\lambda$ \\
 \hline
$|q|$  & $\frac{1}{4|q|} q^*$  &$\frac{1}{4|q|} q$ & $--$\\
 \hline
$|q|^2$  & $\frac{1}{2} q^*$  &$\frac{1}{2} q$ & $--$\\
 \hline
$|\alpha q\beta+\lambda|$  & $\frac{g^*}{2|g|}\alpha\mathfrak{R}(\beta)-\frac{1}{4|g|}\beta^*(\alpha^*g)^*$  &$-\frac{g^*}{4|g|}\alpha\beta^*+\frac{1}{2|g|}\beta^*\mathfrak{R}(\alpha^*g)$& $g=\alpha q\beta+\lambda$ \\
 \hline
$|\alpha q^*\beta+\lambda|$  & $\frac{g}{2|g|}\beta^*\mathfrak{R}(\alpha^*)-\frac{1}{4|g|}\alpha(\beta g^*)^*$  &$-\frac{g}{4|g|}\beta^*(\alpha^*)^*+\frac{1}{2|g|}\alpha\mathfrak{R}(\beta g^*)$& $g=\alpha q^*\beta+\lambda$ \\
 \hline
$|\alpha q\beta+\lambda|^2$  & $g^*\alpha\mathfrak{R}(\beta)-\frac{1}{2}\beta^*(\alpha^*g)^*$  &$-\frac{1}{2}g^*\alpha\beta^*+\beta^*\mathfrak{R}(\alpha^*g)$& $g=\alpha q\beta+\lambda$ \\
 \hline
$|\alpha q^*\beta+\lambda|^2$  & $g\beta^*\mathfrak{R}(\alpha^*)-\frac{1}{2}\alpha(\beta g^*)^*$  &$-\frac{1}{2}g\beta^*(\alpha^*)^*+\alpha\mathfrak{R}(\beta g^*)$& $g=\alpha q^*\beta+\lambda$ \\
\hline
\end{tabular}\\[6pt]
\end{table*}

\begin{table*}[!htbp]
  \centering
\caption{Derivatives of functions $f({\bm q})$ and ${\bm f}({\bm q})$}\label{tb:derivfbmf}
\renewcommand{\arraystretch}{1.3} 
\begin{tabular}{|c|c|c|c|}
 \hline
$f({\bm q})$    & $ \mathcal{D}_{\bm q}f$ & $\mathcal{D}_{{\bm q}^{*}}f$ & Note \\
 \hline
 \multicolumn{4}{c}{}\vspace{-6pt} \\
\hline
 ${\bm a}^T{\bm q}\beta$  & ${\bm a}^T\mathfrak{R}(\beta)$  &$-\frac{1}{2}{\bm a}^T\beta^*$ & $\forall \bm{a}\in\mathbb{H}^{N\times 1},\beta\in\mathbb{H}$\\
 \hline
 ${\bm a}^T{\bm q}^*\beta$  & $-\frac{1}{2}{\bm a}^T\beta^*$  &${\bm a}^T\mathfrak{R}(\beta)$ & $\forall\bm{a}\in\mathbb{H}^{N\times 1},\beta\in\mathbb{H}$\\
 \hline
$\alpha{\bm q}^T{\bm b}$  & $\alpha\mathfrak{R}({\bm b}^T)$  &$-\frac{1}{2}\alpha{\bm b}^H$ & $\forall\bm{b}\in\mathbb{H}^{N\times 1},\alpha\in\mathbb{H}$\\
 \hline
$\alpha{\bm q}^H{\bm b}$  & $-\frac{1}{2}\alpha{\bm b}^H$  &$\alpha\mathfrak{R}({\bm b}^T)$ & $\forall\bm{b}\in\mathbb{H}^{N\times 1},\alpha\in\mathbb{H}$\\
 \hline
${\bm a}^T{\bm q}\alpha{\bm q}^T{\bm b}$  & ${\bm a}^T\mathfrak{R}(\alpha{\bm q}^T{\bm b})+{\bm a}^T{\bm q}\alpha\mathfrak{R}({\bm b}^T)$
&$-\frac{1}{2}{\bm a}^T(\alpha{\bm q}^T{\bm b})^*-\frac{1}{2}{\bm a}^T{\bm q}\alpha{\bm b}^H$ & $\forall{\bm a},{\bm b}\in\mathbb{H}^{N\times 1},\alpha\in\mathbb{H}$\\
 \hline
${\bm a}^T{\bm q}\alpha{\bm q}^H{\bm b}$  & ${\bm a}^T\mathfrak{R}(\alpha{\bm q}^H{\bm b})-\frac{1}{2}{\bm a}^T{\bm q}\alpha{\bm b}^H$
&$-\frac{1}{2}{\bm a}^T(\alpha{\bm q}^H{\bm b})^*+{\bm a}^T{\bm q}\alpha\mathfrak{R}({\bm b}^T)$ & $\forall{\bm a},{\bm b}\in\mathbb{H}^{N\times 1},\alpha\in\mathbb{H}$\\
 \hline
 ${\bm a}^T{\bm q}^*\alpha{\bm q}^T{\bm b}$  & $-\frac{1}{2}{\bm a}^T(\alpha{\bm q}^T{\bm b})^*+{\bm a}^T{\bm q}^*\alpha\mathfrak{R}({\bm b}^T)$
&${\bm a}^T\mathfrak{R}(\alpha{\bm q}^T{\bm b})-\frac{1}{2}{\bm a}^T{\bm q}^*\alpha{\bm b}^H$ & $\forall{\bm a},{\bm b}\in\mathbb{H}^{N\times 1},\alpha\in\mathbb{H}$\\
 \hline
${\bm a}^T{\bm q}^*\alpha{\bm q}^H{\bm b}$  & $-\frac{1}{2}{\bm a}^T(\alpha{\bm q}^H{\bm b})^*-\frac{1}{2}{\bm a}^T{\bm q}^*\alpha{\bm b}^H$
&${\bm a}^T\mathfrak{R}(\alpha{\bm q}^H{\bm b})+{\bm a}^T{\bm q}^*\alpha\mathfrak{R}({\bm b}^T)$ & $\forall{\bm a},{\bm b}\in\mathbb{H}^{N\times 1},\alpha\in\mathbb{H}$\\
 \hline
 ${\bm q}^T{\bm A}{\bm q}$ & ${\bm q}^T{\bm A}+\mathfrak{R}((\bm{Aq})^T)$
 &$-\frac{1}{2}{\bm q}^T{\bm A}-\frac{1}{2}(\bm{Aq})^H$ & $\forall\bm{A}\in\mathbb{H}^{N\times N}$\\
 \hline
 ${\bm q}^H{\bm A}{\bm q}^*$ & $-\frac{1}{2}{\bm q}^H{\bm A}-\frac{1}{2}({\bm A}{\bm q}^*)^H$
 &${\bm q}^H{\bm A}+\mathfrak{R}(({\bm A}{\bm q}^*)^T)$& $\forall\bm{A}\in\mathbb{H}^{N\times N}$ \\
  \hline
 ${\bm q}^T{\bm A}{\bm q}^*$  & $-\frac{1}{2}{\bm q}^T{\bm A}+\mathfrak{R}(({\bm A}{\bm q}^*)^T)$
 &${\bm q}^T{\bm A}-\frac{1}{2}({\bm A}{\bm q}^*)^H$ & $\forall\bm{A}\in\mathbb{H}^{N\times N}$\\
 \hline
 ${\bm q}^T{\bm A}{\bm q}^*$  & $\frac{1}{2}({\bm q}^T{\bm A})^*$
 &$\frac{1}{2}{\bm q}^T{\bm A}$ & ${\bm A}^H={\bm A}$\\
 \hline
 ${\bm q}^H{\bm A}{\bm q}$  & ${\bm q}^H{\bm A}-\frac{1}{2}(\bm{Aq})^H$
 &$-\frac{1}{2}{\bm q}^H{\bm A}+\mathfrak{R}((\bm{Aq})^T)$ & $\forall\bm{A}\in\mathbb{H}^{N\times N}$\\
  \hline
${\bm q}^H{\bm A}{\bm q}$  & $\frac{1}{2}{\bm q}^H{\bm A}$
 &$-\frac{1}{2}({\bm q}^H{\bm A})^*$ & ${\bm A}^H={\bm A}$\\
  \hline
  ${\bm A}{\bm q}\beta$ &${\bm A}\mathfrak{R}(\beta)$ &$-\frac{1}{2}{\bm A}\beta^*$ & $\forall \bm{A}\in\mathbb{H}^{N\times N},\beta\in\mathbb{H}$\\
 \hline
 ${\bm A}{\bm q}^*\beta$  & $-\frac{1}{2}{\bm A}\beta^*$  &${\bm A}\mathfrak{R}(\beta)$ & $\forall\bm{A}\in\mathbb{H}^{N\times N},\beta\in\mathbb{H}$\\
 \hline
$\alpha{\bm q}^T{\bm A}$  & $\alpha\mathfrak{R}({\bm A}^T)$  &$-\frac{1}{2}\alpha{\bm A}^H$ & $\forall\bm{A}\in\mathbb{H}^{N\times N},\alpha\in\mathbb{H}$\\
 \hline
$\alpha{\bm q}^H{\bm A}$  & $-\frac{1}{2}\alpha{\bm A}^H$  &$\alpha\mathfrak{R}({\bm A}^T)$ & $\forall\bm{A}\in\mathbb{H}^{N\times N},\alpha\in\mathbb{H}$\\
 \hline
\end{tabular}\\[6pt]
\end{table*}

\section{Devolopment of quaternion derivatives}

\subsection{Derivatives of $f(q)$}
The case of scalar function of scalar variables is studied thoroughly in \cite{DPXU}.
In this case, the derivatives $\mathcal{D}_{q}f$ and $\mathcal{D}_{q}f$ become $\frac{\partial f}{\partial q}$ and $\frac{\partial f}{\partial q}$, respectively.
Some results of such functions are collected in Table \ref{tb:fderiv}, assuming $\alpha$, $\beta$ and $\lambda$ to be quaternion constants and $q$ to be a quaternion-valued variable.

\subsection{Derivatives of $f({\bm q})$}
Let $f:\mathbb{H}^{N\times 1}\rightarrow \mathbb{H}$ be $f({\bm q})={\bm q}^H{\bm A}{\bm q}$.
This kind of function frequently appears in quaternion filter optimization \cite{Took09,Took10,Took102} and array signal processing \cite{Bihan}.
For example, the optimization of the output power ${\bm q}^H{\bm A}{\bm q}$, where ${\bm q}$ is the filter coefficients and
${\bm A}$ is the input covariance matrix. i.e., ${\bm A}^H={\bm A}$. Using the product rule in Theorem \ref{thm:prodrl}, we have
$\mathcal{D}_{\bm q}{f}={\bm q}^H\mathcal{D}_{\bm q}{({\bm A}{\bm q})}+\mathcal{D}_{\bm q}{({\bm q}^H{\bm A}{\bm q})}|_{\bm{Aq}=const}={\bm q}^H{\bm A}-\frac{1}{2}(\bm{Aq})^H=\frac{1}{2}{\bm q}^H{\bm A}$.
The derivative
$\mathcal{D}_{{\bm q}^*}{\bm f}=\frac{1}{2}({\bm q}^H{\bm A})^*$ can be obtained in a similar manner.
Some results of such functions are shown in Table \ref{tb:derivfbmf}, assuming ${\bm a}\in\mathbb{H}^{N\times 1}$, ${\bm A}\in \mathbb{H}^{N\times N}$ to be constant.
and ${\bm q}\in \mathbb{H}^{N\times 1}$ to be vector variable.

\textit{1) Quaternion Least Mean Square:}
This section derive the quaternion least mean square (QLMS) algorithm given in \cite{Took09,Mandic11} using the GHR calculus.
The cost function to be minimized is a real-valued function
\begin{equation}
J(n)=|e(n)|^2=e^*(n)e(n)
\end{equation}
where
\begin{equation}\label{eq:qlmy}
e(n)=d(n)-y(n),\quad y(n)={\bm w}^T(n){\bm x}(n)
\end{equation}
and $d(n),y(n)\in \mathbb{H},{\bm w}(n),{\bm x}(n)\in \mathbb{H}^{N\times 1}$. The weight update of QLMS is then given by
\begin{equation}\label{eq:qlmsllearnrule}
{\bm w}(n+1)={\bm w}(n)-\eta(\mathcal{D}_{{\bm w}}J(n))^H
\end{equation}
where $\eta$ is the step size and $(\mathcal{D}_{{\bm w}}J(n))^H$ is the gradient of $J(n)$ with respect to ${\bm w}^*$, which defines the direction of the
maximum rate of change of $f$ from Theorem \ref{thm:extchang}. Using the results in Table \ref{tb:derivfbmf}, the gradient can be calculated by
\begin{align}\label{eq:qlmsdJdwconj1b}
\begin{split}
\mathcal{D}_{{\bm w}}J&=\mathcal{D}_{{\bm w}}((d-{\bm w}^T{\bm x})^*(d-{\bm w}^T{\bm x}))\\
&=\mathcal{D}_{{\bm w}}(d^*d)
-\mathcal{D}_{{\bm w}}(d^*{\bm w}^T{\bm x})\\
&\quad-\mathcal{D}_{{\bm w}}({\bm x}^H{\bm w}^*d)+\mathcal{D}_{{\bm w}}\left({\bm x}^H{\bm w}^*{\bm w}^T{\bm x}\right)\\
&=-d^*(n)\mathfrak{R}({\bm x}^T)+\frac{1}{2}{\bm x}^Hd^*\\
&\quad-\frac{1}{2}{\bm x}^H({\bm w}^T{\bm x})^*+{\bm x}^H{\bm w}^*\mathfrak{R}({\bm x}^T)\\
&=-\frac{1}{2}{\bm x}^Td^*+\frac{1}{2}{\bm x}^T({\bm w}^T{\bm x})^*\\
&=-\frac{1}{2}{\bm x}^T(d-{\bm w}^T{\bm x})^*=-\frac{1}{2}{\bm x}^Te^*
\end{split}
\end{align}
where time index `$n$' is omitted for convenience. Then, the update rules of QLMS becomes
\begin{equation}\label{eq:qlmsllearnrule1b}
{\bm w}(n+1)={\bm w}(n)+\eta\, e(n){\bm x}^*(n)
\end{equation}
where the constant $\frac{1}{2}$ in \eqref{eq:qlmsdJdwconj1b} is absorbed into $\eta$.
Note that if we start from $y(n)={\bf w}^H(n){\bf x}(n)$ rather than $y(n)={\bf w}^T(n){\bf x}(n)$ in \eqref{eq:qlmy}, then the final update rule of QLMS would become
\begin{equation}
{\bf w}(n+1)={\bf w}(n)+\eta\, {\bf x}(n)e^*(n)
\end{equation}
The QLMS algorithm in \eqref{eq:qlmsllearnrule1b} is a generic generalization of complex-valued LMS \cite{Widrow} to the case of quaternion vector.

\textit{2) Quaternion Widely Linear Least Mean Square:}
This section derive the widely linear QLMS (WL-QLMS) algorithm based on quaternion widely linear model given in \cite{Took11,Moreno12,Via10}. The cost function
to be minimized is
\begin{equation}
J(n)=|e(n)|^2=e^*(n)e(n)
\end{equation}
where
\begin{equation}
e(n)=d(n)-y(n),\quad e^*(n)=d^*(n)-y^*(n)
\end{equation}
and
\begin{equation}\label{eq:nwwlqlmsoutput}
\begin{split}
y(n)&={\bm h}^H(n){\bm x}(n)+{\bm g}^H(n){\bm x}^i(n)\\
&\quad+{\bm u}^H(n){\bm x}^j(n)+{\bm v}^H(n){\bm x}^k(n)
\end{split}
\end{equation}
The weight updates are then given by
\begin{equation}\label{eq:nwwlqlmsllearnrule}
\begin{split}
&{\bm h}(n+1)={\bm h}(n)-\eta (\mathcal{D}_{{\bm h}}J(n))^H\\
&{\bm g}(n+1)={\bm g}(n)-\eta (\mathcal{D}_{{\bm g}}J(n))^H\\
&{\bm u}(n+1)={\bm u}(n)-\eta (\mathcal{D}_{{\bm u}}J(n))^H\\
&{\bm v}(n+1)={\bm v}(n)-\eta (\mathcal{D}_{{\bm v}}J(n))^H
\end{split}
\end{equation}
where $\eta$ is the step size, $(\mathcal{D}_{{\bm h}}J(n))^H$, $(\mathcal{D}_{{\bm g}}J(n))^H$, $(\mathcal{D}_{{\bm u}}J(n))^H$ and $(\mathcal{D}_{{\bm v}}J(n))^H$ are the gradients of $J(n)$ with respect to ${\bm h}^*$, ${\bm g}^*$, ${\bm u}^*$ and ${\bm v}^*$, respectively.
Using the product rule in Theorem \ref{thm:prodrl}, the derivative $\mathcal{D}_{{\bm h}}J(n)$ is calculated by
\begin{align}\label{eq:qlmsdJdwconj2b}
\mathcal{D}_{{\bm h}}J=e^* \mathcal{D}_{{\bm h}}(e)+\mathcal{D}_{{\bm h}}\left((d-y)^*e\right)\big|_{e=const}
\end{align}
where time index `$n$' is omitted to ease the expressions above. We next calculate the following two derivatives
\begin{equation}\label{eq:iqlmsdedwconj}
\mathcal{D}_{{\bm h}}(e)=\mathcal{D}_{{\bm h}}(d-y)=-\mathcal{D}_{{\bm h}}({\bm h}^H{\bm x})=\frac{1}{2}{\bm x}^H
\end{equation}
\begin{align}\label{eq:iqlmsdeconjdwconj}
\mathcal{D}_{{\bm h}}\left((d-y)^*e\right)\big|_{e=const}&=-\mathcal{D}_{{\bm h}}({\bm x}^H{\bm h}\,e)\big|_{e=const}\nonumber\\
&=-{\bm x}^H\mathfrak{R}(e)
\end{align}
where the terms $\mathcal{D}_{{\bm q}}({\bm q}^H{\bm a})$ and $\mathcal{D}_{{\bm q}}({\bm a}^T{\bm q}\beta)$ are given in Table \ref{tb:derivfbmf},
and are used in the last equalities above. Substituting \eqref{eq:iqlmsdedwconj} and \eqref{eq:iqlmsdeconjdwconj} into \eqref{eq:qlmsdJdwconj2b} yields
\begin{equation}\label{eq:iqlmsdJdwconj1b}
\mathcal{D}_{{\bm h}}J=\frac{1}{2}e^*{\bm x}^H-{\bm x}^H\mathfrak{R}(e)=-\frac{1}{2}e\,{\bm x}^H
\end{equation}
The derivatives $\mathcal{D}_{{\bm g}}J(n)$, $\mathcal{D}_{{\bm u}}J(n)$ and $\mathcal{D}_{{\bm v}}J(n)$  can be calculated in a similar way to \eqref{eq:iqlmsdJdwconj1b} and are given by
\begin{equation}\label{eq:nwwlqlmsdJdwconj1c}
\begin{split}
&\mathcal{D}_{{\bm h}}J=-\frac{1}{2}e\,{\bm x}^H,\quad \mathcal{D}_{{\bm u}}J=-\frac{1}{2}e\,({\bm x}^j)^H\\
&\mathcal{D}_{{\bm g}}J=-\frac{1}{2}e\,({\bm x}^i)^H,\quad \mathcal{D}_{{\bm v}}J=-\frac{1}{2}e\,({\bm x}^k)^H
\end{split}
\end{equation}
Finally, the update within WL-QLMS can be expressed as
\begin{equation}\label{eq:nwwlqlmsllearnruleww}
\begin{split}
&{\bm h}(n+1)={\bm h}(n)+\eta\, {\bm x}(n)e^*(n)\\
&{\bm g}(n+1)={\bm g}(n)+\eta\,{\bm x}^i(n)e^*(n)\\
&{\bm u}(n+1)={\bm u}(n)+\eta\, {\bm x}^j(n)e^*(n)\\
&{\bm v}(n+1)={\bm v}(n)+\eta\,{\bm x}^k(n)e^*(n)
\end{split}
\end{equation}
where the constant $\frac{1}{2}$ in \eqref{eq:nwwlqlmsdJdwconj1c} is absorbed into $\eta$.

\textit{3) Quaternion Affine Projection Algorithm:}
This section derive the quaternion affine projection algorithm (QAPA) given in \cite{Jahanchahi13} based on the GHR calculus.
The aim of QAPA is to minimise adaptively the squared Euclidean norm of the change in the weight vector ${\bm w}(n)\in \mathbb{H}^{N\times 1}$, that is
\begin{equation}\label{eq:APA}
\begin{split}
&\textrm{minimise }\; \|\Delta {\bm w}(n)\|^{2} = \|{\bm w}(n+1) - {\bm  w}(n)\|^{2}\\
&\textrm{subject to }\; {\bm d}^{T}(n) = {\bm w}^{H}(n+1){\bm Q}(n)
\end{split}
\end{equation}
where ${\bm d}(n) = [d(n), \ldots, d(n-S+1)]^{T}\in \mathbb{H}^{S\times 1}$ denotes the desired signal vector
and ${\bm Q}(n) = [{\bm q}(n), \ldots, {\bm q}(n-S+1)]\in\mathbb{H}^{N\times S}$ denotes the matrix of $S$ past input vectors.
Using the Lagrange multipliers, the constrained optimisation problem \eqref{eq:APA} can be solved by the following cost function
\begin{equation}\label{eq:Cost_APA}
\begin{split}
J(n) &= \|{\bm w}(n+1) - {\bm w}(n)\|^{2} \\
&\quad+ \mathfrak{R}\{({\bm d}^{T}(n)- {\bm w}^{H}(n+1){\bm Q}(n)){\bm\lambda}^{*}\}\\
&= ({\bm w}(n+1) - {\bm w}(n))^H({\bm w}(n+1) - {\bm w}(n)) \\
&\quad+ \frac{1}{2}({\bm d}^{T}(n)- {\bm w}^{H}(n+1){\bm Q}(n))\bm \lambda^{*}\\
&\quad+ \frac{1}{2}{\bm \lambda}^{T}({\bm d}^{*}(n)- {\bm Q}^H(n){\bm w}(n+1))
\end{split}
\end{equation}
where ${\bm \lambda}\in\mathbb{H}^{S\times 1}$ denotes the Lagrange multipliers. Using the results in Table \ref{tb:derivfbmf}, we have
\begin{align}\label{eq:APA_Deriv_1}
\mathcal{D}_{{\bm w}(n+1)}J(n)&=\frac{1}{2}({\bm w}(n+1) - {\bm w}(n))^H\nonumber\\
&\quad+\frac{1}{2}\left(\frac{1}{2}({\bm Q}(n){\bm \lambda}^*)^H-{\bm \lambda}^T{\bm Q}^H(n)\right)\\
&=\frac{1}{2}({\bm w}(n+1) - {\bm w}(n))^H-\frac{1}{4}{\bm \lambda}^T{\bm Q}^H(n)\nonumber
\end{align}
Setting \eqref{eq:APA_Deriv_1} to zero, the weight update of QAPA can be obtained as
\begin{equation}
{\bm w}(k+1) - {\bm w}(k) = \frac{1}{2}{\bm Q}(n)\,{\bm \lambda}^*
\label{eq:APA_Deriv_2}
\end{equation}
Using the fact that ${\bm e}^{T}(n) = {\bm d}^{T}(n) - {\bm y}^{T}(n) = ({\bm w}^{H}(n+1)-{\bm w}^{H}(n)){\bm Q}(n)$, and based on \eqref{eq:APA_Deriv_2}, $\bm\lambda$ can be solved as
\begin{equation}\label{eq:APA_Deriv_3}
{\bm \lambda}^T = 2 {\bm e}^T(n)\left({\bm Q}^{H}(n){\bm Q}(n)\right)^{-1}
\end{equation}
which gives
\begin{equation}\label{eq:APA_Deriv_4}
{\bm w}(n+1) =  {\bm w}(n) + {\bm Q}(n)\left({\bm Q}^{H}(n){\bm Q}(n)\right)^{-1}{\bm e}^*(n)
\end{equation}
Note that to prevent the normalisation matrix ${\bm Q}^{H}(n){\bm Q}(n)$ within \eqref{eq:APA_Deriv_4} from becoming singular,
a small regularisation term $\varepsilon{\bm I}\in\mathbb{H}^{S\times S}$ is practically added with ${\bm I}$ the identity matrix,
whereas a step size $\eta$ is also incorporated to control the convergence and the steady state performance, giving the final weight update of QAPA in the form
\begin{eqnarray}\label{eq:APA_Update}
{\bm w}(n+1)= {\bm w}(n) +\eta\,{\bm Q}(n)\left({\bm Q}^{H}(n){\bm Q}(n)+\varepsilon{\bm I}\right)^{-1}{\bm e}^*(n)
\end{eqnarray}

\begin{table*}[!htbp]
\centering
\caption{Derivatives of functions of the type $f({\bm Q})$}\label{tb:derivtrQ}
\renewcommand{\arraystretch}{1.3} 
\begin{tabular}{|c|c|c|c|}
 \hline
$f({\bm Q})$   & $\frac{\partial f}{\partial {\bm Q}}$ & $\frac{\partial f}{\partial {\bm Q}^*}$ \\
 \hline
 \multicolumn{3}{c}{}\vspace{-6pt} \\
\hline
 $\textrm{Tr}({\bm Q})$ & ${\bm I}_N$  &$-\frac{1}{2}{\bm I}_N$ \\
 \hline
 $\textrm{Tr}({\bm Q}^H)$ & $-\frac{1}{2}{\bm I}_N$  &${\bm I}_N$ \\
 \hline
 $\textrm{Tr}({\bm A}{\bm Q})$ & ${\bm A}^T$  &$-\frac{1}{2}{\bm A}^T$ \\
 \hline
  $\textrm{Tr}({\bm A}{\bm Q}^H)$ & $-\frac{1}{2}{\bm A}$  &${\bm A}$ \\
 \hline
  $\textrm{Tr}({\bm Q}{\bm A})$  & $\mathfrak{R}({\bm A}^T)$  &$-\frac{1}{2}{\bm A}^H$ \\
 \hline
 $\textrm{Tr}({\bm Q}^H{\bm A})$ & $-\frac{1}{2}{\bm A}^*$  &$\mathfrak{R}({\bm A})$ \\
 \hline
 $\textrm{Tr}({\bm A}_1{\bm Q}{\bm A}_2)$  & ${\bm A}^T_1\mathfrak{R}({\bm A}^T_2)$  &$-\frac{1}{2}{\bm A}^T_1{\bm A}^H_2$ \\
 \hline
   $\textrm{Tr}({\bm A}_1{\bm Q}^*{\bm A}_2)$ & $-\frac{1}{2}{\bm A}^T_1{\bm A}^H_2$  &${\bm A}^T_1\mathfrak{R}({\bm A}^T_2)$ \\
 \hline
  $\textrm{Tr}({\bm A}_1{\bm Q}^T{\bm A}_2)$ & $\mathfrak{R}({\bm A}_2){\bm A}_1$  &$-\frac{1}{2}({\bm A}^T_1{\bm A}^H_2)^T$ \\
 \hline
 $\textrm{Tr}({\bm A}_1{\bm Q}^H{\bm A}_2)$  & $-\frac{1}{2}({\bm A}^T_1{\bm A}^H_2)^T$  &$\mathfrak{R}({\bm A}_2){\bm A}_1$ \\
 \hline
 $\textrm{Tr}({\bm Q}^n)$
 & $\sum\limits_{m=1}^n({\bm Q}^T)^{n-m}\mathfrak{R}({\bm Q}^{m-1})^T$
 &$-\frac{1}{2}\sum\limits_{m=1}^n({\bm Q}^T)^{n-m}({\bm Q}^{m-1})^H$ \\
 \hline
 $\textrm{Tr}({\bm Q}^{-1})$
 & $-({\bm Q}^T)^{-1}\mathfrak{R}({\bm Q}^{-1})^T$
 &$\frac{1}{2}({\bm Q}^T)^{-1}({\bm Q}^{-1})^H$ \\
 \hline
  $\textrm{Tr}({\bm A}_1{\bm Q}{\bm A}_2{\bm Q}{\bm A}_3)$
 & ${\bm A}^T_1\mathfrak{R}({\bm A}_2{\bm Q}{\bm A}_3)^T+({\bm A}_1{\bm Q}{\bm A}_2)^T\mathfrak{R}({\bm A}^T_3)$
 &$-\frac{1}{2}{\bm A}^T_1({\bm A}_2{\bm Q}{\bm A}_3)^H-\frac{1}{2}({\bm A}_1{\bm Q}{\bm A}_2)^T{\bm A}_3^H$ \\
 \hline
 $\textrm{Tr}({\bm A}_1{\bm Q}{\bm A}_2{\bm Q}^*{\bm A}_3)$
 & ${\bm A}^T_1\mathfrak{R}({\bm A}_2{\bm Q}^*{\bm A}_3)^T-\frac{1}{2}({\bm A}_1{\bm Q}{\bm A}_2)^T{\bm A}_3^H$
 &$-\frac{1}{2}{\bm A}^T_1({\bm A}_2{\bm Q}^*{\bm A}_3)^H+({\bm A}_1{\bm Q}{\bm A}_2)^T\mathfrak{R}({\bm A}^T_3)$ \\
 \hline
 $\textrm{Tr}({\bm A}_1{\bm Q}{\bm A}_2{\bm Q}^T{\bm A}_3)$
 & ${\bm A}^T_1\mathfrak{R}({\bm A}_2{\bm Q}^T{\bm A}_3)^T+\mathfrak{R}({\bm A}_3){\bm A}_1{\bm Q}{\bm A}_2$
 &$-\frac{1}{2}{\bm A}^T_1({\bm A}_2{\bm Q}^T{\bm A}_3)^H-\frac{1}{2}(({\bm A}_1{\bm Q}{\bm A}_2)^T{\bm A}_3^H)^T$ \\
 \hline
  $\textrm{Tr}({\bm A}_1{\bm Q}{\bm A}_2{\bm Q}^H{\bm A}_3)$
 & ${\bm A}^T_1\mathfrak{R}({\bm A}_2{\bm Q}^H{\bm A}_3)^T-\frac{1}{2}(({\bm A}_1{\bm Q}{\bm A}_2)^T{\bm A}_3^H)^T$
 &$-\frac{1}{2}{\bm A}^T_1({\bm A}_2{\bm Q}^H{\bm A}_3)^H+\mathfrak{R}({\bm A}_3){\bm A}_1{\bm Q}{\bm A}_2$ \\
 \hline
 $\textrm{Tr}({\bm A}_1{\bm Q}^*{\bm A}_2{\bm Q}{\bm A}_3)$
 & $-\frac{1}{2}{\bm A}^T_1({\bm A}_2{\bm Q}{\bm A}_3)^H+({\bm A}_1{\bm Q}^*{\bm A}_2)^T\mathfrak{R}({\bm A}^T_3)$
 &${\bm A}^T_1\mathfrak{R}({\bm A}_2{\bm Q}{\bm A}_3)^T-\frac{1}{2}({\bm A}_1{\bm Q}^*{\bm A}_2)^T{\bm A}_3^H$ \\
 \hline
 $\textrm{Tr}({\bm A}_1{\bm Q}^*{\bm A}_2{\bm Q}^*{\bm A}_3)$
 & $-\frac{1}{2}{\bm A}^T_1({\bm A}_2{\bm Q}^*{\bm A}_3)^H-\frac{1}{2}({\bm A}_1{\bm Q}^*{\bm A}_2)^T{\bm A}_3^H$
 &${\bm A}^T_1\mathfrak{R}({\bm A}_2{\bm Q}^*{\bm A}_3)^T+({\bm A}_1{\bm Q}^*{\bm A}_2)^T\mathfrak{R}({\bm A}^T_3)$ \\
 \hline
 $\textrm{Tr}({\bm A}_1{\bm Q}^*{\bm A}_2{\bm Q}^T{\bm A}_3)$
 & $-\frac{1}{2}{\bm A}^T_1({\bm A}_2{\bm Q}^T{\bm A}_3)^H+\mathfrak{R}({\bm A}_3){\bm A}_1{\bm Q}^*{\bm A}_2$
 &${\bm A}^T_1\mathfrak{R}({\bm A}_2{\bm Q}^T{\bm A}_3)^T-\frac{1}{2}(({\bm A}_1{\bm Q}^*{\bm A}_2)^T{\bm A}_3^H)^T$ \\
 \hline
 $\textrm{Tr}({\bm A}_1{\bm Q}^*{\bm A}_2{\bm Q}^H{\bm A}_3)$
 & $-\frac{1}{2}{\bm A}^T_1({\bm A}_2{\bm Q}^H{\bm A}_3)^H-\frac{1}{2}(({\bm A}_1{\bm Q}^*{\bm A}_2)^T{\bm A}_3^H)^T$
 &${\bm A}^T_1\mathfrak{R}({\bm A}_2{\bm Q}^H{\bm A}_3)^T+\mathfrak{R}({\bm A}_3){\bm A}_1{\bm Q}^*{\bm A}_2$ \\
 \hline
  $\textrm{Tr}({\bm A}_1{\bm Q}^T{\bm A}_2{\bm Q}{\bm A}_3)$
 & $\mathfrak{R}({\bm A}_2{\bm Q}{\bm A}_3){\bm A}_1+({\bm A}_1{\bm Q}^T{\bm A}_2)^T\mathfrak{R}({\bm A}^T_3)$
 &$-\frac{1}{2}({\bm A}^T_1({\bm A}_2{\bm Q}{\bm A}_3)^H)^T-\frac{1}{2}({\bm A}_1{\bm Q}^T{\bm A}_2)^T{\bm A}_3^H$ \\
 \hline
   $\textrm{Tr}({\bm A}_1{\bm Q}^T{\bm A}_2{\bm Q}^*{\bm A}_3)$
 & $\mathfrak{R}({\bm A}_2{\bm Q}^*{\bm A}_3){\bm A}_1-\frac{1}{2}({\bm A}_1{\bm Q}^T{\bm A}_2)^T{\bm A}_3^H$
 &$-\frac{1}{2}({\bm A}^T_1({\bm A}_2{\bm Q}^*{\bm A}_3)^H)^T+({\bm A}_1{\bm Q}^T{\bm A}_2)^T\mathfrak{R}({\bm A}^T_3)$ \\
 \hline
    $\textrm{Tr}({\bm A}_1{\bm Q}^T{\bm A}_2{\bm Q}^T{\bm A}_3)$
 & $\mathfrak{R}({\bm A}_2{\bm Q}^T{\bm A}_3){\bm A}_1+\mathfrak{R}({\bm A}_3){\bm A}_1{\bm Q}^T{\bm A}_2$
 &$-\frac{1}{2}({\bm A}^T_1({\bm A}_2{\bm Q}^T{\bm A}_3)^H)^T-\frac{1}{2}(({\bm A}_1{\bm Q}^T{\bm A}_2)^T{\bm A}_3^H)^T$ \\
 \hline
     $\textrm{Tr}({\bm A}_1{\bm Q}^T{\bm A}_2{\bm Q}^H{\bm A}_3)$
 & $\mathfrak{R}({\bm A}_2{\bm Q}^H{\bm A}_3){\bm A}_1-\frac{1}{2}(({\bm A}_1{\bm Q}^T{\bm A}_2)^T{\bm A}_3^H)^T$
 &$-\frac{1}{2}({\bm A}^T_1({\bm A}_2{\bm Q}^H{\bm A}_3)^H)^T+\mathfrak{R}({\bm A}_3){\bm A}_1{\bm Q}^T{\bm A}_2$ \\
 \hline
    $\textrm{Tr}({\bm A}_1{\bm Q}^H{\bm A}_2{\bm Q}{\bm A}_3)$
 & $-\frac{1}{2}({\bm A}^T_1({\bm A}_2{\bm Q}{\bm A}_3)^H)^T+({\bm A}_1{\bm Q}^H{\bm A}_2)^T\mathfrak{R}({\bm A}^T_3)$
 &$\mathfrak{R}({\bm A}_2{\bm Q}{\bm A}_3){\bm A}_1-\frac{1}{2}({\bm A}_1{\bm Q}^H{\bm A}_2)^T{\bm A}_3^H$ \\
 \hline
    $\textrm{Tr}({\bm A}_1{\bm Q}^H{\bm A}_2{\bm Q}^*{\bm A}_3)$
 & $-\frac{1}{2}({\bm A}^T_1({\bm A}_2{\bm Q}^*{\bm A}_3)^H)^T-\frac{1}{2}({\bm A}_1{\bm Q}^H{\bm A}_2)^T{\bm A}_3^H$
 &$\mathfrak{R}({\bm A}_2{\bm Q}^*{\bm A}_3){\bm A}_1+({\bm A}_1{\bm Q}^H{\bm A}_2)^T\mathfrak{R}({\bm A}^T_3)$ \\
 \hline
     $\textrm{Tr}({\bm A}_1{\bm Q}^H{\bm A}_2{\bm Q}^T{\bm A}_3)$
 & $-\frac{1}{2}({\bm A}^T_1({\bm A}_2{\bm Q}^T{\bm A}_3)^H)^T+\mathfrak{R}({\bm A}_3){\bm A}_1{\bm Q}^H{\bm A}_2$
 &$\mathfrak{R}({\bm A}_2{\bm Q}^T{\bm A}_3){\bm A}_1-\frac{1}{2}(({\bm A}_1{\bm Q}^H{\bm A}_2)^T{\bm A}_3^H)^T$ \\
 \hline
      $\textrm{Tr}({\bm A}_1{\bm Q}^H{\bm A}_2{\bm Q}^H{\bm A}_3)$
 & $-\frac{1}{2}({\bm A}^T_1({\bm A}_2{\bm Q}^H{\bm A}_3)^H)^T-\frac{1}{2}(({\bm A}_1{\bm Q}^H{\bm A}_2)^T{\bm A}_3^H)^T$
 &$\mathfrak{R}({\bm A}_2{\bm Q}^H{\bm A}_3){\bm A}_1+\mathfrak{R}({\bm A}_3){\bm A}_1{\bm Q}^H{\bm A}_2$ \\
 \hline
\end{tabular}\\[6pt]
\end{table*}

\subsection{Derivatives of $f({\bm Q})$}
For scalar functions $f: \mathbb{H}^{N\times S}\rightarrow\mathbb{H}$,
it is common to define the following notations of matrix derivative
\begin{align}
\frac{\partial f}{\partial {\bm Q}^{\mu}}\triangleq\left[\begin{array}{ccc}
\frac{\partial f}{\partial q^{\mu}_{11}}& \cdots &\frac{\partial f}{\partial q^{\mu}_{1S}}\\
\vdots  & \ddots & \vdots \\
\frac{\partial f}{\partial q^{\mu}_{N1}}& \cdots &\frac{\partial f}{\partial q^{\mu}_{NS}}
\end{array}\right]\label{def:matnew}\\
\frac{\partial f}{\partial {\bm Q}^{\mu*}}\triangleq\left[\begin{array}{ccc}
\frac{\partial f}{\partial q^{\mu*}_{11}}& \cdots &\frac{\partial f}{\partial q^{\mu*}_{1S}}\\
\vdots  & \ddots & \vdots \\
\frac{\partial f}{\partial q^{\mu*}_{N1}}& \cdots &\frac{\partial f}{\partial q^{\mu*}_{NS}}
\end{array}\right]\label{def:matnew2}
\end{align}
which are called the gradient of $f$ with respect to ${\bm Q}^{\mu}$ and ${\bm Q}^{\mu*}$.
Equations \eqref{def:matnew} and \eqref{def:matnew2} are generalizations of the real- and complex-valued
case given in \cite{Magnus,Hjorungnes07} to the quaternion case. By comparing \eqref{def:matderi}  and \eqref{def:matderi2} with \eqref{def:matnew} and \eqref{def:matnew},
their connection is given by
\begin{equation}
\mathcal{D}_{{\bm Q}^{\mu}}f=\textrm{vec}^T\left(\frac{\partial f}{\partial {\bm Q}^{\mu}}\right),\quad
\mathcal{D}_{{\bm Q}^{\mu*}}f=\textrm{vec}^T\left(\frac{\partial f}{\partial {\bm Q}^{\mu*}}\right)
\end{equation}
Then, the steepest descent method \eqref{eq:stdesc} can be reformulated as
\begin{equation}\label{eq:stdesc}
{\bm Q}_{n+1}={\bm Q}_{n}-\eta\frac{\partial f}{\partial {\bm Q}^{*}},\quad f\in\mathbb{R}
\end{equation}
where $\eta>0$ is the step size. Some important results of functions of the type $f({\bm Q})$ are summarized in
Table \ref{tb:derivtrQ}, where ${\bm Q}\in\mathbb{H}^{N\times S}$ or possibly ${\bm Q}\in\mathbb{H}^{N\times N}$ for the functions to be defined, and ${\bm A}_1,{\bm A}_2,{\bm A}_3,{\bm A}$ are chosen such that the functions are well defined.

\textit{1) Quaternion Matrix Least Squares:} Given ${\bm A}\in \mathbb{H}^{R\times N}$, ${\bm B}\in \mathbb{H}^{S\times P}$ and ${\bm
C}\in \mathbb{H}^{R\times P}$, find ${\bm Q}\in \mathbb{H}^{N\times S}$ such that
 the error of the overdetermined linear system of equations
\begin{equation}\label{eq:qlsproblem}
F({\bm Q})={\rm Tr}\left\{\left({\bm C}-\bm{AQB}\right)^H\left({\bm C}-\bm{AQB}\right)\right\}
\end{equation}
is minimized. Using the results in Table \ref{tb:derivtrQ}, the gradient of $F({\bm Q})$ can be derived as
\begin{align}\label{eq:lqsgrad}
\frac{\partial F({\bm Q})}{\partial {\bm Q}^*}&=\frac{\partial {\rm Tr}\left\{{\bm C}^H{\bm C}-{\bm C}^H\bm{AQB}-{\bm B}^H{\bm Q}^H{\bm A}^H{\bm C} \right\}}{\partial {\bm Q}^*}\nonumber\\
&\quad +\frac{\partial {\rm Tr}\left\{{\bm B}^H{\bm Q}^H{\bm A}^H\bm{AQB} \right\}}{\partial {\bm Q}^*}\nonumber\\
&=\frac{1}{2}({\bm C}^H{\bm A})^T{\bm B}^H-\mathfrak{R}({\bm A}^H{\bm C}){\bm B}^H\nonumber\\
&\quad+\mathfrak{R}({\bm A}^H\bm{AQB}){\bm B}^H-\frac{1}{2}({\bm B}^H{\bm Q}^H{\bm A}^H{\bm A})^T{\bm B}^H\nonumber\\
&=-\frac{1}{2}({\bm A}^H{\bm C}){\bm B}^H+\frac{1}{2}({\bm A}^H\bm{AQB}){\bm B}^H\nonumber\\
&=-\frac{1}{2}{\bm A}^H({\bm C}-\bm{AQB}){\bm B}^H
\end{align}
Setting \eqref{eq:lqsgrad} to be zero, we obtain a normal equation
\begin{equation}
{\bm A}^H\bm{AQB}{\bm B}^H={\bm A}^H{\bm C}{\bm B}^H
\end{equation}
If ${\bm A}^H{\bm A}$ and ${\bm B}{\bm B}^H$ are invertible, then the system \eqref{eq:qlsproblem} has a unique solution
\begin{equation}
{\bm Q}=({\bm A}^H{\bm A})^{-1}{\bm A}^H{\bm C}{\bm B}^H({\bm B}{\bm B}^H)^{-1}
\end{equation}

\subsection{Derivatives of ${\bm F}({\bm Q})$}
We next present the derivatives of some elementary matrix functions, which are often used in nonlinear adaptive filters and neural networks.
For other useful examples of matrix functions, we simply apply the provided
theory and summarize the results in Table \ref{tb:matderiv}, where ${\bm Q}\in\mathbb{H}^{N\times S}$ or possibly ${\bm Q}\in\mathbb{H}^{N\times N}$ for the functions to be defined,
and ${\bm A}_1,{\bm A}_2,{\bm A}$ are chosen such that the functions are well defined.

\textit{1) Derivatives of Power Function:} Let ${\bm F}:\mathbb{H}^{N\times N}\rightarrow \mathbb{H}^{N\times N}$ be given by
${\bm F}({\bm Q})={\bm Q}^n$, where $n$ is a positive integer number. Using the product rule in Theorem \ref{thm:prodrl}, we have
\begin{flalign}
\mathcal{D}_{{\bm Q}}({\bm Q}^n)&=({\bm I}_N\otimes {\bm Q})\mathcal{D}_{{\bm Q}}({{\bm Q}^{n-1}})+\mathcal{D}_{{\bm Q}}({\bm Q}{\bm Q}^{n-1})|_{{\bm Q}^{n-1}=const}\nonumber\\
&=({\bm I}_N\otimes {\bm Q})\mathcal{D}_{{\bm Q}}({{\bm Q}^{n-1}})+\mathfrak{R}({\bm Q}^{n-1})^T\otimes {\bm I}_N
\end{flalign}
where the term $\mathcal{D}_{{\bm Q}}({\bm Q}{\bm A})$, given in Table \ref{tb:matderiv}, was used in the last equality.
Note that the above expression is recurrent about $\mathcal{D}_{{\bm Q}}({\bm Q}^n)$.
Expanding this expression and using the initial condition $\mathcal{D}_{{\bm Q}}({\bm Q})={\bm I}_{N^2}$, yields
\begin{equation}\label{eq:res1powfun}
\mathcal{D}_{{\bm Q}}({\bm Q}^n)=\sum_{m=1}^n({\bm I}_N\otimes {\bm Q})^{n-m}(\mathfrak{R}({\bm Q}^{m-1})^T\otimes {\bm I}_N)
\end{equation}
In a similar manner, we have
\begin{equation}
\mathcal{D}_{{\bm Q}^*}({\bm Q}^n)=-\frac{1}{2}\sum_{m=1}^n({\bm I}_N\otimes {\bm Q})^{n-m}(({\bm Q}^{m-1})^H\otimes {\bm I}_N)
\end{equation}

\textit{2) Derivatives of Exponential Function:} Let ${\bm F}:\mathbb{H}^{N\times N}\rightarrow \mathbb{H}^{N\times N}$ be given by
${\bm F}({\bm Q})=\sum\limits_{n=0}^{+\infty}\frac{{\bm Q}^n}{n!}$. From \eqref{eq:res1powfun}, we have
\begin{equation}\label{eq:expderivative1}
\mathcal{D}_{{\bm Q}}{\bm F}=\sum_{n=0}^{+\infty}\sum_{m=1}^n\frac{1}{n!}({\bm I}_N\otimes {\bm Q})^{n-m}(\mathfrak{R}({\bm Q}^{m-1})^T\otimes {\bm I}_N)
\end{equation}
In a similar manner, we have
\begin{equation}\label{eq:expderivative5}
\mathcal{D}_{{\bm Q}^*}{\bm F}=\sum_{n=0}^{+\infty}\sum_{m=1}^n\frac{-1}{2\,n!}({\bm I}_N\otimes {\bm Q})^{n-m}(({\bm Q}^{m-1})^H\otimes {\bm I}_N)
\end{equation}
The two examples are a generation of the quaternion scalar variable case treated in \cite{DPXU} to the quaternion matrix variable case. Likewise,
the derivatives of the  trigonometric functions and hyperbolic functions can be derived in terms of the exponential
function.

\begin{table*}[!htbp]
\centering
\caption{Derivatives of functions of the type ${\bm F}({\bm Q})$}\label{tb:matderiv}
\renewcommand{\arraystretch}{1.3} 
\begin{tabular}{|c|c|c|c|}
 \hline
${\bm F}({\bm Q})$   & $\mathcal{D}_{\bm Q}{\bm F}$ & $\mathcal{D}_{{\bm Q}^*}{\bm F}$ \\
 \hline
 \multicolumn{3}{c}{}\vspace{-6pt} \\
\hline
 ${\bm Q}$ & ${\bm I}_{NS}$  &$-\frac{1}{2}{\bm I}_{NS}$ \\
 \hline
 ${\bm Q}^H$ & $-\frac{1}{2}{\bm K}_{N,S}$  &${\bm K}_{N,S}$ \\
 \hline
 ${\bm A}{\bm Q}$ & ${\bm I}_S \otimes{\bm A}$  &$-\frac{1}{2}{\bm I}_S \otimes{\bm A}$ \\
 \hline
${\bm A}{\bm Q}^*$ & $-\frac{1}{2}{\bm I}_S \otimes{\bm A}$  &${\bm I}_S \otimes{\bm A}$ \\
 \hline
${\bm A}{\bm Q}^T$ & $({\bm I}_N \otimes{\bm A}){\bm K}_{N,S}$  &$-\frac{1}{2}({\bm I}_N \otimes{\bm A}){\bm K}_{N,S}$ \\
 \hline
  ${\bm A}{\bm Q}^H$ & $-\frac{1}{2}({\bm I}_N \otimes{\bm A}){\bm K}_{N,S}$  &$({\bm I}_N \otimes{\bm A}){\bm K}_{N,S}$ \\
 \hline
  ${\bm Q}{\bm A}$  & $\mathfrak{R}({\bm A}^T)\otimes {\bm I}_N$  &$-\frac{1}{2}{\bm A}^H\otimes {\bm I}_N$ \\
 \hline
 ${\bm Q}^*{\bm A}$  & $-\frac{1}{2}{\bm A}^H\otimes {\bm I}_N$  &$\mathfrak{R}({\bm A}^T)\otimes {\bm I}_N$ \\
 \hline
  ${\bm Q}^T{\bm A}$ & $(\mathfrak{R}({\bm A}^T)\otimes {\bm I}_S){\bm K}_{N,S}$  &$-\frac{1}{2}({\bm A}^H\otimes {\bm I}_S){\bm K}_{N,S}$ \\
 \hline
 ${\bm Q}^H{\bm A}$ & $-\frac{1}{2}({\bm A}^H\otimes {\bm I}_S){\bm K}_{N,S}$  &$(\mathfrak{R}({\bm A}^T)\otimes {\bm I}_S){\bm K}_{N,S}$ \\
 \hline
 ${\bm A}_1{\bm Q}{\bm A}_2$  & $({\bm I}_P\otimes{\bm A}_1)(\mathfrak{R}({\bm A}^T_2)\otimes {\bm I}_N)$  &$-\frac{1}{2}({\bm I}_P\otimes{\bm A}_1)({\bm A}^H_2\otimes {\bm I}_N)$ \\
 \hline
   ${\bm A}_1{\bm Q}^*{\bm A}_2$ & $-\frac{1}{2}({\bm I}_P\otimes{\bm A}_1)({\bm A}^H_2\otimes {\bm I}_N)$  &$({\bm I}_P\otimes{\bm A}_1)(\mathfrak{R}({\bm A}^T_2)\otimes {\bm I}_N)$ \\
 \hline
  ${\bm A}_1{\bm Q}^T{\bm A}_2$ & $({\bm I}_P\otimes{\bm A}_1)(\mathfrak{R}({\bm A}^T_2)\otimes {\bm I}_S){\bm K}_{N,S}$  &$-\frac{1}{2}({\bm I}_P\otimes{\bm A}_1)({\bm A}^H_2\otimes {\bm I}_S){\bm K}_{N,S}$ \\
 \hline
 ${\bm A}_1{\bm Q}^H{\bm A}_2$  & $-\frac{1}{2}({\bm I}_P\otimes{\bm A}_1)({\bm A}^H_2\otimes {\bm I}_S){\bm K}_{N,S}$  &$({\bm I}_P\otimes{\bm A}_1)(\mathfrak{R}({\bm A}^T_2)\otimes {\bm I}_S){\bm K}_{N,S}$ \\
 \hline
 ${\bm Q}^n$
 & $\sum\limits_{m=1}^n({\bm I}_N\otimes {\bm Q})^{n-m}(\mathfrak{R}({\bm Q}^{m-1})^T\otimes {\bm I}_N)$
 &$-\frac{1}{2}\sum\limits_{m=1}^n({\bm I}_N\otimes {\bm Q})^{n-m}(({\bm Q}^{m-1})^H\otimes {\bm I}_N)$ \\
 \hline
 ${\bm Q}^{-1}$
 & $-({\bm I}_N\otimes{\bm Q})^{-1}(\mathfrak{R}({\bm Q}^{-1})^T\otimes {\bm I}_N)$
 &$\frac{1}{2}({\bm I}_N\otimes{\bm Q})^{-1}(({\bm Q}^{-1})^H\otimes {\bm I}_N)$ \\
 \hline
  ${\bm Q}{\bm A}{\bm Q}^T$
 & $\mathfrak{R}({\bm A}{\bm Q}^T)^T\otimes {\bm I}_N+({\bm I}_N \otimes({\bm Q}{\bm A})){\bm K}_{N,S}$
 &$-\frac{1}{2}({\bm A}{\bm Q}^T)^H\otimes {\bm I}_N-\frac{1}{2}({\bm I}_N \otimes(\bm{QA})){\bm K}_{N,S}$ \\
 \hline
  ${\bm Q}{\bm A}{\bm Q}^H$
 & $\mathfrak{R}({\bm A}{\bm Q}^H)^T\otimes {\bm I}_N-\frac{1}{2}({\bm I}_N \otimes({\bm Q}{\bm A})){\bm K}_{N,S}$
 &$-\frac{1}{2}({\bm A}{\bm Q}^H)^H\otimes {\bm I}_N+({\bm I}_N \otimes(\bm{QA})){\bm K}_{N,S}$ \\
 \hline
 ${\bm Q}^*{\bm A}{\bm Q}^T$
 & $-\frac{1}{2}({\bm A}{\bm Q}^T)^H\otimes {\bm I}_N+({\bm I}_N \otimes({\bm Q}^*{\bm A})){\bm K}_{N,S}$
 &$\mathfrak{R}({\bm A}{\bm Q}^T)^T\otimes {\bm I}_N-\frac{1}{2}({\bm I}_N \otimes({\bm Q}^*{\bm A})){\bm K}_{N,S}$ \\
 \hline
${\bm Q}^*{\bm A}{\bm Q}^H$
 & $-\frac{1}{2}({\bm A}{\bm Q}^H)^H\otimes {\bm I}_N-\frac{1}{2}({\bm I}_N \otimes({\bm Q}^*{\bm A})){\bm K}_{N,S}$
 &$\mathfrak{R}({\bm A}{\bm Q}^H)^T\otimes {\bm I}_N+({\bm I}_N \otimes({\bm Q}^*{\bm A})){\bm K}_{N,S}$ \\
 \hline
 ${\bm Q}^T{\bm A}{\bm Q}$
 & $(\mathfrak{R}(\bm{AQ})^T\otimes {\bm I}_S){\bm K}_{N,S}+{\bm I}_S \otimes({\bm Q}^T{\bm A})$
 &$-\frac{1}{2}((\bm{AQ})^H\otimes {\bm I}_S){\bm K}_{N,S}-\frac{1}{2}{\bm I}_S \otimes({\bm Q}^T{\bm A})$ \\
 \hline
  ${\bm Q}^T{\bm A}{\bm Q}^*$
 & $(\mathfrak{R}({\bm A}{\bm Q}^*)^T\otimes {\bm I}_S){\bm K}_{N,S}-\frac{1}{2}{\bm I}_S \otimes({\bm Q}^T{\bm A})$
 &$-\frac{1}{2}(({\bm A}{\bm Q}^*)^H\otimes {\bm I}_S){\bm K}_{N,S}+{\bm I}_S \otimes({\bm Q}^T{\bm A})$ \\
 \hline
 ${\bm Q}^H{\bm A}{\bm Q}$
 & $-\frac{1}{2}((\bm{AQ})^H\otimes {\bm I}_S){\bm K}_{N,S}+{\bm I}_S \otimes({\bm Q}^H{\bm A})$
 &$(\mathfrak{R}(\bm{AQ})^T\otimes {\bm I}_S){\bm K}_{N,S}-\frac{1}{2}{\bm I}_S \otimes({\bm Q}^H{\bm A})$ \\
 \hline
  ${\bm Q}^H{\bm A}{\bm Q}^*$
 & $-\frac{1}{2}({\bm A}{\bm Q}^*)^H\otimes {\bm I}_S){\bm K}_{N,S}-\frac{1}{2}{\bm I}_S \otimes({\bm Q}^H{\bm A})$
 &$(\mathfrak{R}({\bm A}{\bm Q}^*)^T\otimes {\bm I}_S){\bm K}_{N,S}+{\bm I}_S \otimes({\bm Q}^H{\bm A})$ \\
 \hline
\end{tabular}\\[6pt]
\end{table*}

\section{Conclusions}
A systematic framework for the calculation of derivatives of quaternion matrix functions of quaternion matrix variables has been proposed based on the GHR calculus.
New matrix forms of product and chain rules have been introduced to conveniently calculate the derivatives of quaternion matrix functions,
and several theorems have been developed for quaternion gradient optimisation, such as for the identification of stationary points, direction of
maximum change problems, and the steepest descent methods. Furthermore, the usefulness of the presented method
has been illustrated on some typical gradient based optimization
problems in signal processing. Key results are given in tabular form.

\section*{Acknowledgment}
The authors dedicate this work to the memory of Prof. Are Hj{\o}rungnes, a pioneer of complex-valued matrix derivatives.

%

\end{document}